\documentclass[12pt, reqno]{amsart}
\usepackage{amssymb, amsthm, amsmath, amsfonts}
\usepackage{array, epsfig}
\usepackage{bbm}

\setlength{\oddsidemargin}{-0.0in} \setlength{\textwidth}{6.5in}
\setlength{\topmargin}{-0.0in} \setlength{\textheight}{8.4in} \evensidemargin
\oddsidemargin
\parindent=8mm

\begin{document}

\renewcommand{\proofname}{\bf Proof}
\newtheorem{rem}{Remark}
\newtheorem*{cor*}{Corollary}
\newtheorem{cor}{Corollary}
\newtheorem{prop}{Proposition}
\newtheorem{lem}{Lemma}
\newtheorem{theo}{Theorem}
\newfont{\zapf}{pzcmi}

\def\R{\mathbb{R}}
\def\N{\mathbb{N}}
\def\E{\mathbb{E}}
\def\P{\mathbb{P}}
\def\V{\mathbb{D}}
\def\I{\mathbbm{1}}
\newcommand{\D}{\hbox{\zapf D}}

\title[On the probability that integrated random walks stay positive]
{On the probability that integrated random walks stay positive}

\author[V. Vysotsky]{Vladislav Vysotsky}
\thanks{Supported in part by the Moebius Contest Foundation for Young Scientists.}
\email{vysotsky@math.udel.edu}
\address{
Mathematical Sciences\\
University of Delaware\\
517A Ewing Hall\\
Newark, DE 19716, USA}
\subjclass[2000]{60G50, 60F99}
\keywords{Integrated random walk, area of random walk, unilateral small deviations, one-sided exit probability, excursion, area of excursion}

\begin{abstract}
Let $S_n$ be a centered random walk with a finite variance, and consider the sequence $A_n:=\sum_{i=1}^n S_i$, which we call an {\it integrated random walk}. We are interested in the asymptotics of $$p_N:=\P \Bigl \{ \min \limits_{1 \le k \le N} A_k \ge 0 \Bigr \}$$ as $N \to \infty$. Sinai (1992) proved that $p_N \asymp N^{-1/4}$ if $S_n$ is a simple random walk. We show that $p_N \asymp N^{-1/4}$ for some other types of random walks that include double-sided exponential and double-sided geometric walks, both not necessarily symmetric. We also prove that $p_N \le c N^{-1/4}$ for integer-valued walks and upper exponential walks, which are the walks such that $\mbox{Law} (S_1 | S_1>0)$ is an exponential distribution.
\end{abstract}

\maketitle

\section{Introduction}
Let $S_n$ be a centered random walk with a finite variance, and consider the sequence of r.v.'s $A_n:=\sum_{i=1}^n S_i$, which we  call an {\it integrated random walk}. We are interested in the asymptotical behavior of the probabilities $$p_N:=\P \Bigl \{ \min \limits_{1 \le k \le N} A_k  \ge 0 \Bigr \}$$ as $N \to \infty$. We came to this problem while studying properties of so-called sticky particle systems, see Vysotsky~\cite{Me}.
One may consider this question as a particular case of the general problem on finding one-sided small deviation probabilities of a random sequence.

The only known sharp result on $p_N$ is due to Sinai~\cite{Sinai}, who showed that $p_N \asymp N^{-1/4}$ for a simple random walk. Sinai studied this problem in connection with solutions of the Burgers equation with random initial data. Caravenna and Deuschel~\cite{Polymers} considered such probabilities in relation to random polymers, and they obtained a rough non-polynomial upper bound for $p_N$ for general random walks. A rough lower bound is given by the trivial $p_N \ge \P \bigl \{ \min \limits_{1 \le k \le N} S_k  \ge 0 \bigr \} \sim c N^{-1/2}$.

For the continuous version of the problem,
\begin{equation}\label{int W}
\P \biggl \{ \min_{0 \le s \le N} \int_0^s W(u) du \ge - 1 \biggr \} \sim c N^{-1/4},
\end{equation}
where $W(u)$ is a Wiener process and $c$ is a positive constant that could be found explicitly. This result of Isozaki and Watanabe~\cite{Japan} refines a weaker version of \eqref{int W} obtained by Sinai~\cite{Sinai}, who had $\asymp$ instead of $\sim$ in the right-hand side. Isozaki and Watanabe actually conclude \eqref{int W} from McKean~\cite{McKean}.

These asymptotical results of \cite{Japan} and \cite{Sinai} prompted the author to conjecture in \cite{Me} that $p_N \asymp N^{-1/4}$ for any centered random walk with a finite variance. In this paper we obtain several results that partially prove the conjecture. Note that it seems impossible to get the relation $p_N \asymp N^{-1/4}$ directly from \eqref{int W} because even if $S_n = W(n)$ is a standard Gaussian random walk, $\int_0^n W(u) du -\sum_{i=1}^n W(i)$ has order $n^{1/2}$.

Let us first state a result on the upper bound for $p_N$. We say that a r.v. $X$ is {\it upper exponential} if $\mbox{Law} (X | X>0)$ is an exponential  distribution. A typical example is an exponential r.v. centered by its expectation. An integer-valued r.v. $X$ is called {\it upper geometric} if $\mbox{Law} (X | X>0)$ is a geometric distribution. In what follows, we refer to random walks by the type of common distribution of their increments.

\begin{theo}
\label{UPPER B}
Let $S_n$ be a centered random walk with a finite variance that is either
integer-valued or upper exponential. Then $p_N \le c N^{-1/4}$ for some constant $c>0$.
\end{theo}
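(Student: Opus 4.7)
The plan is to reduce the persistence event $\{A_k\ge 0,\ k\le N\}$ to a persistence event for a one-dimensional auxiliary walk built from the signed areas of the successive excursions of~$S$.

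I would start by defining zero-crossing times: set $\tau_0:=0$ and, for $i\ge 1$, let $\tau_i$ be the $i$-th time $S$ returns to~$0$ (in the integer-valued case) or enters a chosen neighborhood of~$0$ (in the upper-exponential case, where memorylessness of the exponential makes the overshoot at each return have a clean distribution and the walk restart i.i.d.). Setting $L_i:=\tau_i-\tau_{i-1}$ and $\xi_i:=A_{\tau_i}-A_{\tau_{i-1}}$, the strong Markov property makes the pairs $(L_i,\xi_i)$ i.i.d., and by pairing consecutive half-excursions one can arrange for $\E\xi_i=0$. Since $\{A_k\ge 0,\ k\le N\}$ implies $\Sigma_m:=\sum_{i\le m}\xi_i\ge 0$ for every~$m$ with $\tau_m\le N$, it suffices to bound the probability of this weaker event.

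Next I would pin down the tail asymptotics. Sparre Andersen's theorem gives $\P(L_1>n)\asymp n^{-1/2}$, and by Donsker's invariance principle the area of an excursion of length~$\ell$ is of order $\ell^{3/2}$, so $\P(|\xi_1|>x)\asymp x^{-1/3}$; thus $\xi_1$ lies in the domain of attraction of a symmetric $\tfrac13$-stable law. Applied to the (effectively symmetric) walk~$\Sigma$, Sparre Andersen then yields the persistence bound $\P(\Sigma_m\ge 0,\ m\le M)\le cM^{-1/2}$. Simultaneously, $L_1$ lies in the positive $\tfrac12$-stable domain, so the renewal count $\nu_N:=\#\{m:\tau_m\le N\}$ is of order~$\sqrt{N}$. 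Taking $M\sim\sqrt{N}$ and combining these two facts via the joint distribution of $(L_i,\xi_i)$ — crucially, the fact that conditioning on $\{\Sigma_j\ge 0,\ j\le M\}$ does not force the cycles to be atypically long, so that $\P(\tau_M\le N\mid\Sigma_j\ge 0,\ j\le M)$ stays bounded away from zero — gives $p_N\le cM^{-1/2}=cN^{-1/4}$.

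The main obstacle is precisely this combining step. A naive splitting $p_N\le\P(\Sigma_j\ge 0,\ j\le M)+\P(\tau_M>N)$, which treats the two events as independent, loses a factor and yields only $N^{-1/6}$ after optimization in~$M$; a sound argument requires the full joint analysis of excursion lengths and excursion areas. The integer-valued and upper-exponential hypotheses of the theorem enter throughout to make the cycle decomposition clean: the walk restarts from~$0$ (or from an exponentially distributed overshoot) after each $\tau_i$, so that the i.i.d.\ structure and both invocations of Sparre Andersen apply without residual error terms coming from the overshoot distribution.
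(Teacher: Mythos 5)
Your setup matches the paper's: decompose $\widetilde S$ into i.i.d.\ cycles $(\theta_i,\xi_i)$ (or $(\theta_i^0,\xi_i^0)$ in the lattice case), note that $\{A_k\ge 0,\ k\le N\}$ forces the partial sums of the areas to stay nonnegative, and exploit that $\theta_1$ is in the $\tfrac12$-stable domain. You also correctly flag that the naive union bound loses a factor and gives $N^{-1/6}$. But the resolution you sketch for the combining step is a genuine gap: the claim that $\P(\tau_M\le N\mid\Sigma_j\ge 0,\ j\le M)$ is bounded away from zero is stated without justification, and even granted it, the inequality runs the wrong way — it would help lower-bound $\P(\Sigma_j\ge 0,\ j\le M,\ \tau_M\le N)$ in terms of $\P(\Sigma_j\ge 0,\ j\le M)$, not upper-bound $p_N$. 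In fact, association arguments (used in the paper for the \emph{lower} bound, Lemma~\ref{ASSOC}) suggest that cycle area and cycle duration are positively correlated, so conditioning on the $\Sigma$-walk staying nonnegative is more likely to make cycles \emph{longer}, which works against you.

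The paper closes the gap by an exact generating-function computation rather than a matching heuristic. Set $\nu:=\min\{k>0:\xi_1+\dots+\xi_k<0\}$; then $\tilde p_N\le\P\{\tau_\nu>N\}$ where $\tau_\nu=\theta_1+\dots+\theta_\nu$ is the physical time at which the cycle-sum first dips below zero — this eliminates the random index $\eta(N)$ entirely. The bivariate generating function $\chi(s,t)$ of $(\nu,\tau_\nu)$ satisfies Sinai's refinement of Sparre--Andersen,
\begin{equation*}
\ln\frac{1}{1-\chi(s,t)}=\sum_{k,l\ge 1}\frac{s^k t^l}{k}\,\P\bigl\{\xi_1+\dots+\xi_k<0,\ \theta_1+\dots+\theta_k=l\bigr\},
\end{equation*}
and the crucial input is the distributional symmetry $(\xi_1,\theta_1)\stackrel{\D}{=}(-\xi_1,\theta_1)$ from Lemma~\ref{EXP SYMM} (this is where the upper-exponential or integer-valued hypothesis is really used, not merely to ``restart cleanly''). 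That symmetry replaces the joint probability by $\tfrac12\P\{\theta_1+\dots+\theta_k=l\}$, so that $1-\chi(t)=\sqrt{1-\zeta(t)}$ with $\zeta$ the generating function of $\theta_1$ (plus a bounded correction $e^{H(t)}$ in the lattice case). The Tauberian theorem together with Proposition~\ref{TAILS}(a) then yields $\P\{\tau_\nu>N\}\sim cN^{-1/4}$. Note that the paper's upper-bound argument never needs the $\tfrac13$-stable tail of $\xi_1$ you invoke — the symmetry alone collapses the bivariate problem to a univariate one about the durations.
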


Our proof is based on the fact that any integer-valued random walk $S_n$ with $\E S_1=0$ and $Var (S_1) < \infty$ returns to zero almost surely. This of course does not hold for the ``continuous'' case, and we need to impose the condition of upper exponentiality. It is unclear if it is possible to remove this additional assumption using discretization and the result for integer-valued walks: the discretized centered walk should be also centered. On the other hand, it worth to cite the comment from Feller~\cite[p. 404]{Feller}: ``At first sight the distribution $F$ [an upper exponential distribution] ... appears artificial, but the type turns up frequently in connection with Poisson processes, queuing theory, ruin problems, etc.'' Moreover, Theorem~\ref{UPPER B} is important for the results of~\cite{Me}, where the primary interest was in exponential walks centered by expectation.

We prove lower bounds for $p_N$ under more restrictive conditions, which are imposed on $\mbox{Law} (S_1 | S_1 < 0)$. A r.v. $X$ is called {\it two-sided exponential} if both $X$ and $-X$ are upper exponential. A typical example is the Laplace distribution but two-sided exponential distributions are not necessarily symmetric. Further, we follow Spitzer~\cite{Spitzer} and say that a r.v. $X$ is {\it right-continuous} if $\P \bigl \{ X \in \{\dots, -1, 0, 1\} \bigr \} =1$. Finally, define a {\it slackened simple random walk} as a nondegenerate symmetric right-continuous walk. Informally speaking, these are simple random walks allowed to stay immobile.

Note that upper exponential, upper geometric, and right-continuous random walks have the same common property, which plays the key role in our proofs: the overshoot over any fixed level is independent of the moment when its occurs and also of the trajectory of the walk up to this moment.

\begin{theo} \label{LOWER B}
1. Let $S_n$ be a centered random walk such that both $S_n$ and $-S_n$ are either upper geometric or right-continuous. Then $N^{-1/4} l(N) \le p_N $ for some function $l(n)$ that is slowly varying at infinity.

2. Let $S_n$ be a centered random walk that is either double-sided exponential or satisfies conditions of Part 1 and is symmetric. Then $c N^{-1/4} \le p_N $ for some constant $c>0$.
\end{theo}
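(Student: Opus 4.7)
The plan is to exploit the overshoot independence property emphasised in the paragraph just before the theorem, which holds for every walk considered in both parts, in order to construct a regenerative decomposition of $(S_n,A_n)$ indexed by the sign-changes of $S$.

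Let $\nu_0:=0<\nu_1<\nu_2<\cdots$ denote the successive times at which $S_n$ crosses the level $0$. On each interval $[\nu_k,\nu_{k+1}]$ the walk $S_n$ keeps a constant sign, so $A_n$ is monotone there and its minimum on the interval is attained at an endpoint. This gives the key identity
\begin{equation*}
\bigl\{\min_{1\le n\le N}A_n\ge 0\bigr\}=\bigl\{A_N\ge 0\bigr\}\cap\bigl\{A_{\nu_k}\ge 0\text{ for every }\nu_k\le N\bigr\},
\end{equation*}
reducing the problem to estimating the right-hand side.

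By the overshoot property, the values $|S_{\nu_k}|$ are independent of the trajectory before time $\nu_k$ and follow a fixed distribution (a point mass at $0$ in the right-continuous case, geometric for upper geometric walks, exponential for two-sided exponential walks), alternating with the parity of $k$. This yields an honest regenerative structure of period two: setting $B_j:=A_{\nu_{2j}}-A_{\nu_{2(j-1)}}$, the variables $B_j$ are i.i.d., and the partial sums $\widetilde A_k:=B_1+\cdots+B_k$ record the values of $A$ at the even-indexed sign-changes. Since $\E A_n=0$ for every deterministic $n$, a renewal/ergodic-theorem argument forces $\E B_1=0$, so $\widetilde A_k$ is a genuine centered random walk.

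It remains to estimate $\P\{\widetilde A_k\ge 0\text{ for all }k\le K\}$ for $K$ such that $\nu_{2K}$ is comparable to $N$; a CLT-type argument shows one needs $K$ of order $\sqrt N$. The bound $N^{-1/4}$ then follows provided the one-sided exit probability of $\widetilde A$ is of order $K^{-1/2}L(K)$ for a slowly varying $L$. For Part~1 this is supplied by Rogozin / Doney-type estimates giving $L$ slowly varying; for the symmetric sub-case of Part~2 the walk $\widetilde A$ is symmetric and Sparre Andersen's combinatorial identity gives $L$ constant; for the two-sided exponential sub-case of Part~2, memorylessness of the overshoots makes the law of $B_1$ tractable enough to verify positivity parameter $\rho=1/2$ with constant $L$. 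The terminal condition $A_N\ge 0$ is handled by restricting to the positive-probability event that the excursion containing $N$ is positive, using overshoot independence once more. The main obstacle will be verifying that $B_1$ has positivity parameter exactly $1/2$ in the asymmetric two-sided exponential case, and a secondary point is establishing $\E B_1=0$ rigorously, which is delicate because sign-change intervals have infinite expectation in the null-recurrent regime and Wald's identity does not apply directly.
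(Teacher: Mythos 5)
Your cycle decomposition is essentially the one the paper uses (the paper decomposes at overshoots of the zero level from below, giving i.i.d.\ pairs $(\xi_n,\theta_n)$ by the memorylessness of $\mbox{Law}(S_1|S_1>0)$ and $\mbox{Law}(S_1|S_1<0)$), and the reduction to the walk of cycle areas is the right starting point, but two of the steps you wave at are precisely where the real work is, and a third claim is simply false.

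First, the factorisation. After the reduction you need to lower-bound
\[
\P\Bigl\{\min_{k\le\sqrt N}\textstyle\sum_{i\le k}\xi_i\ge 0,\ \eta(N)+1\le\sqrt N\Bigr\},
\]
and the two events are not independent: the same cycles determine both the area walk $\sum\xi_i$ and the occupation time $\sum\theta_i$. You phrase this as ``estimate $\P\{\widetilde A_k\ge 0\ \text{for all}\ k\le K\}$ for $K$ such that $\nu_{2K}$ is comparable to $N$'', which quietly replaces the joint event by a conditional one and gives no way to recombine. The paper resolves this by proving (Lemma~\ref{ASSOC}) that the family $\{\xi_n,\theta_n^+\}$ is \emph{associated}, so that the joint probability is at least the product of the marginals, and the $\theta$-factor is then bounded below by a constant using the $\alpha=1/2$ stable tail of $\theta_1^+$. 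That association lemma is the crux of the lower bound, and it is the place where the restriction on $\mbox{Law}(S_1|S_1<0)$ is actually used; your proposal omits it entirely.

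Second, you assert ``$\E B_1=0$'' via a renewal argument. This cannot hold: the cycle area $B_1$ (your notation) is, up to indexing, the paper's $\xi_1$, and Corollary~\ref{COR XI} shows $\xi_1$ lies in the domain of attraction of a symmetric $1/3$-stable law, which has no first moment. So $\E B_1$ is not defined. Fortunately the mean is not what you need; what you need is the positivity parameter $\rho=1/2$ of the walk $\sum\xi_i$, and you correctly identify the asymmetric two-sided exponential case as the main obstacle --- but you leave it open, calling the law of $B_1$ ``tractable enough.'' The paper supplies the missing ingredient in Lemma~\ref{EXP SYMM}: a duality/time-reversal argument showing $(\xi_1,\theta_1)\stackrel{\D}{=}(-\xi_1,\theta_1)$ for upper exponential walks, so $\xi_1$ is exactly symmetric and the Sparre Andersen bound applies with a constant. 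For Part~1 (not necessarily symmetric), the paper instead uses $\P\{\sum\xi_i>0\}\to1/2$ (a consequence of the $1/3$-stable tail) together with Rogozin's theorem, which yields only a slowly varying factor --- matching the weaker statement of Part~1 and explaining why it is stated that way. Without Lemma~\ref{ASSOC} and Lemma~\ref{EXP SYMM} (or substitutes), your outline does not close.
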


Note that Part 1 covers walks that are lower geometric and right-continuous or vise versa, and both Parts 1 and 2 cover walks with $\P\{S_1 = 0\} >0$. From Theorems~\ref{UPPER B} and \ref{LOWER B}, we conclude the following.

\begin{cor*}
Let $S_n$ be a centered random walk that is two-sided exponential, slackened simple, or  symmetric two-sided geometric. Then $p_N \asymp N^{-1/4}$.
\end{cor*}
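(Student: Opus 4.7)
The plan is to verify, class by class, that each of the three walk types appearing in the corollary fulfills the hypotheses of Theorem~\ref{UPPER B} (for the upper bound $p_N \le c N^{-1/4}$) and of some part of Theorem~\ref{LOWER B} (for the matching lower bound $p_N \ge c N^{-1/4}$). Combined, these two estimates give the two-sided asymptotic $p_N \asymp N^{-1/4}$. So the argument is entirely a bookkeeping check, not a fresh computation.

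First I would handle the upper bound. A two-sided exponential walk is by definition upper exponential, so Theorem~\ref{UPPER B} applies directly. Both slackened simple walks and symmetric two-sided geometric walks are integer-valued (the former by right-continuity, the latter because geometric distributions live on the integers), so Theorem~\ref{UPPER B} again yields $p_N \le c N^{-1/4}$ in those cases.

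Next I would handle the lower bound via Part~2 of Theorem~\ref{LOWER B}. For the two-sided exponential case this is immediate, since the first clause of Part~2 is exactly the two-sided exponential hypothesis. For the remaining two cases I would invoke the second clause of Part~2, which requires the walk to be symmetric and to satisfy the conditions of Part~1 (that is, both $S_n$ and $-S_n$ are either upper geometric or right-continuous). Symmetry is built into both definitions; and since $-S_n \stackrel{d}{=} S_n$ under symmetry, if $S_n$ is right-continuous so is $-S_n$, and similarly for the upper geometric property. This covers slackened simple walks (symmetric and right-continuous, hence the right-continuous alternative of Part~1 applies on both sides) and symmetric two-sided geometric walks (symmetric with both $S_n$ and $-S_n$ upper geometric).

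There is essentially no obstacle here, since the corollary is purely a logical combination. The only point warranting a line of comment in the write-up is the observation that symmetry forces the two-sided version of Part~1's hypothesis from its one-sided form, which is what makes the three cases fall under Part~2 rather than just under the slowly varying bound of Part~1.
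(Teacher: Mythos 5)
Your proof is correct and takes essentially the same approach as the paper: the corollary is stated there as an immediate consequence of Theorems~\ref{UPPER B} and \ref{LOWER B}, and your class-by-class check (integer-valued or upper exponential for the upper bound; double-sided exponential or symmetric satisfying Part~1 for Part~2 of the lower bound) is exactly the intended verification.
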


We prove the upper bound following the main idea of the proof of Sinai~\cite{Sinai}, although we make significant simplifications. For the lower bound, only a sketch of the proof was given in~\cite{Sinai} but all interesting details were omitted. We failed to conclude these missing arguments, and therefore we prove the lower bounds in an entirely different way. In fact, \cite{Sinai} implicitly uses a local limit theorem for bivariate walks whose first component is conditioned to stay positive and, as the main difficulty, has increments from the domain of attraction of an $\alpha$-stable law (with $\alpha=1/3$). It was only recently when Vatutin and Wachtel~\cite{Vatutin} proved a weaker result, a local limit theorem for such heavy-tailed (univariate) walks conditioned to stay positive. Thus, the other contribution of our paper is the first complete proof of the lower bound for $p_N$.

The paper is organized as follows. In Section~\ref{Sec Heur -> Proof} we give a heuristic explanation of why $p_N \asymp N^{-1/4}$ for a simple random walk, and then develop and generalize the basic idea of this heuristic approach making it applicable to the random walks considered here. In Section~\ref{Sec Properties} we prove preparatory results on durations and areas of ``cycles'' of random walks; a {\it cycle} is a positive excursion together with the consecutive negative excursion. In particular, in Proposition~\ref{TAILS} we find the asymptotics of the ``tail'' of the joint distribution of these variables. This simplifies and generalizes the analogous result of Sinai~\cite{Sinai} obtained by sophisticated but tedious arguments which work only for simple random walks. In Sections~\ref{Sec Upper} and~\ref{Sec Lower} we prove upper and lower bounds for $p_N$, respectively. Finally, in Section~\ref{Sec Concluding} we make concluding remarks and discuss possible ways to prove the lower bound under less restrictive conditions.

\section{From heuristics to proofs} \label{Sec Heur -> Proof}
\subsection{Heuristics for the asymptotics of $p_N$} \label{SSec Heur}
Let us give a heuristic explanation of why $p_N \asymp N^{-1/4}$ for a simple random walk. We took  the following arguments from the survey paper Vergassola et al.~\cite{Verga}, which provides a simple informal explanation of the complicated proofs of Sinai~\cite{Sinai}. The approach itself was introduced in~\cite{Sinai} although $p_N$ was estimated there in a different way.

The main idea of Sinai's method is to decompose the trajectory of the random walk $S_k$
into  independent excursions. Define the moments of hitting zero as $\tau_0^0:=0$ and
$\tau_{n+1}^0:= \min \bigl\{ k > \tau_n^0: S_k = 0 \bigr\}$ for $n \ge 0$. Let
$\theta_n^0 := \tau_n^0- \tau_{n-1}^0$ be durations of excursions, let $\xi_n^0 :=
\sum_{i=\tau_{n-1}^0 + 1}^{\tau_n^0} S_i$ be their areas, and let $\eta^0(N)$ be the
number of complete excursions by the time $N$, namely, $\eta^0(N):= \max \bigl\{k \ge 0:
\tau_k^0 \le N \bigr\} = \max \bigl\{k \ge 0: \sum_{i=1}^k \theta_i^0 \le N \bigr\}$.
Since for each $n$ it holds that $$\Bigl \{ \min \limits_{1 \le k \le \tau_n^0} \sum_{i=1}^k S_i  \ge 0 \Bigr \} = \Bigl \{ \min \limits_{1 \le k \le n } \sum_{i=1}^k \xi_i^0 \ge 0 \Bigr \},$$ as $\tau_{\eta^0(N)}^0 \le N < \tau_{\eta^0(N)+1}^0$, we have
\begin{equation}\label{main estimate}
\P \Bigl \{ \min \limits_{1 \le k \le \eta^0(N)+1} \sum_{i=1}^k \xi_i^0
\ge 0 \Bigr \} \le \P \Bigl \{ \min \limits_{1 \le k \le N} \sum_{i=1}^k S_i  \ge 0 \Bigr \} \le \P \Bigl \{ \min \limits_{1 \le k \le \eta^0(N)} \sum_{i=1}^k \xi_i^0  \ge 0 \Bigr \}.
\end{equation}

Note that $\xi_n^0$ are i.i.d. and symmetric, hence $\sum_{i=1}^k \xi_i^0$ is a symmetric random walk. It is well known that for such random walks $$ \P \Bigl \{ \min \limits_{1
\le k \le n} \sum_{i=1}^k \xi_i^0  \ge 0 \Bigr \} \sim \frac{c}{\sqrt{n}}$$ as $n \to
\infty$ for a certain constant $c>0$. On the other hand, $\eta^0(N) \asymp N^{1/2}$ in probability as $N \to \infty$ because of another well-known fact that $\theta_1^0$ belongs to the domain of normal attraction of an $\alpha$-stable law with exponent $1/2$. Were $\eta^0(N)$ independent with the walk $\sum_{i=1}^k \xi_i^0$, these asymptotical
estimates and \eqref{main estimate} would immediately imply $p_N \asymp N^{-1/4}$.

Unfortunately, $\eta^0(N) = \max \bigl\{k \ge 0: \sum_{i=1}^k \theta_i^0 \le N \bigr\}$
and $\sum_{i=1}^k \xi_i^0$ are dependent, and a careful study of the joint distributions
of $(\xi_1^0, \theta_1^0)$ is  required. Sinai~\cite{Sinai} gives a tedious analysis of
the generating function of $(\xi_1^0, \theta_1^0)$ using the theory of continuous
fractions. However, these arguments can not be generalized since the crucial recursive
relation for the generating function of $(\xi_1^0, \theta_1^0)$ was obtained in~\cite{Sinai} using binary structure of increments of simple random walks.

\subsection{Preparatory definitions}
In our proofs, we use a generalization of the described approach of decomposing the trajectory of the walk into independent excursions. In this section we introduce appropriate definitions.

Suppose, at first, that $S_n$ is an integer-valued random walk. We keep the previous notations but define $\tau_n^0$ as the moments of {\it returning} to zero: $\tau_0^0:=0$ and $\tau_{n+1}^0:= \min \bigl\{ k > \tau_n^0 +1: S_k = 0, S_{k-1} \neq 0 \bigr\}$ for $n \ge 0$, which coincide with the moments of {\it hitting} zero if $S_n$ is a simple random walk. The variables $\tau_{n+1}^0$ are finite with probability $1$ because the walk is integer-valued, centered, and has a finite variance. Only the upper bound in \eqref{main estimate} remains valid because the walk can jump over the zero level without hitting it.

Clearly, the described approach does not work for general walks. We shall consider different stopping times.

Define conditional probability $\widetilde{\P} \{ \cdot \} := \P \{ \cdot  | S_1 > 0 \}$
and define $\tilde{p}_N$ as $p_N$ but with $\P$ replaced by $\widetilde{\P}$. Note that
it suffices to prove Theorems~\ref{UPPER B} and \ref{LOWER B} for $\tilde{p}_N$ instead of $p_N$. Indeed, $$p_N = \P \Bigl \{ \min \limits_{1 \le k \le N} \sum_{i=1}^k S_i  \ge 0 \Bigr \} = a_+ \sum_{n=0}^N a_0^n \, \P \Bigl \{ \min \limits_{1 \le k \le N-n} \sum_{i=1}^k S_i \ge 0 \Bigl | S_1>0\Bigr. \Bigr \} = a_+ \sum_{n=0}^N
a_0^n \, \tilde{p}_{N-n},$$ where $$a_+:= \P \{ S_1 >0\}, \quad a_0:= \P \{ S_1 =0\},
\quad a_-:= \P \{ S_1 <0\}.$$ Hence
\begin{equation}
\label{p eqv p}
p_N \asymp \tilde{p}_N
\end{equation}
if $\tilde{p}_N$ decays polynomially.

Now, let $X_1^+$ be a r.v. with the distribution $\mbox{Law} (S_1 | S_1>0)$ and
independent with the walk $S_n$, and put $\widetilde{S}_n:=X_1^+ + S_n - S_1$ for $n \ge
1$. Clearly, $$\mbox{Law} (\widetilde{S}_1 , \widetilde{S}_2, \dots) = \mbox{Law}
(S_1, S_2, \dots | S_1>0).$$

\begin{figure}[ht]
\centering
\includegraphics[width=1.0\textwidth]{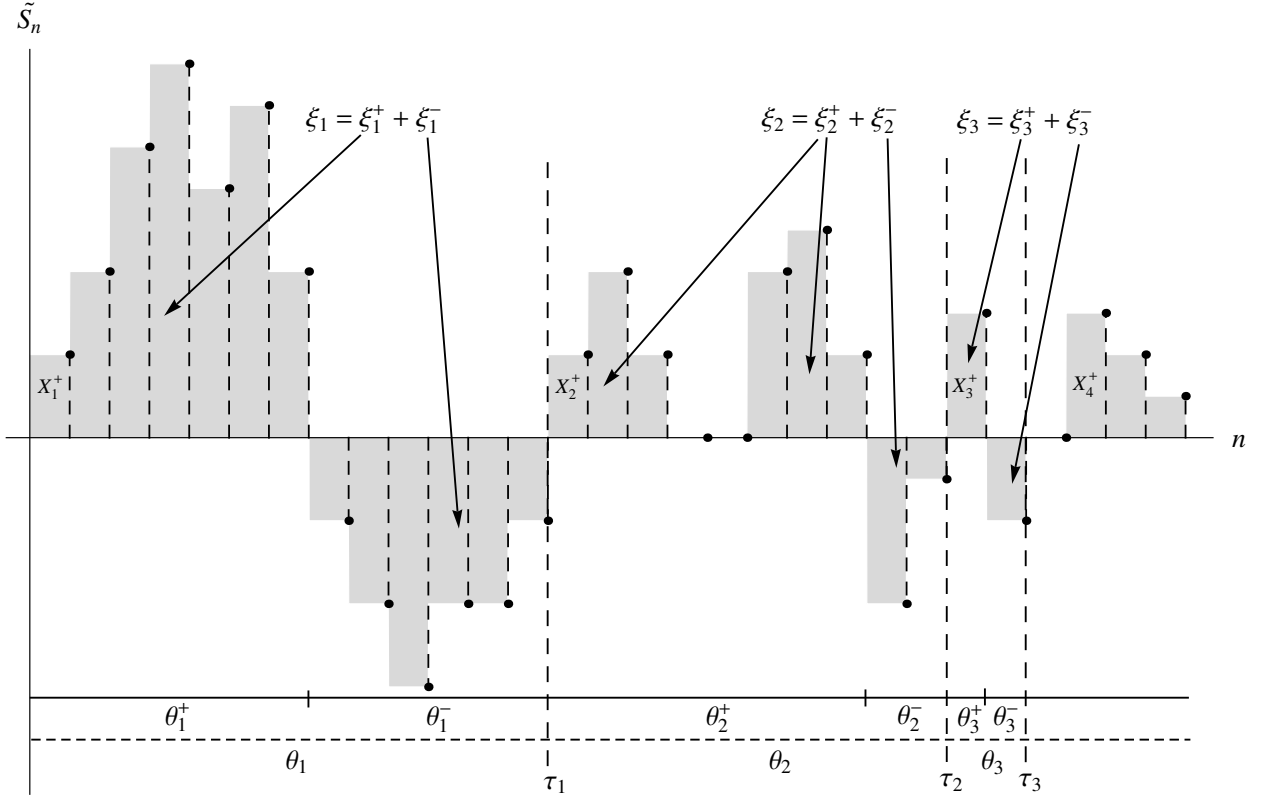}
\caption{Decomposition of the trajectory of $\widetilde{S}_n$ into ``cycles''.} \label{Pic 1}
\end{figure}

For convenience of the reader, the following definitions are represented in comprehensive Fig.~\ref{Pic 1}. Define the moments $\tau_n$ when $\widetilde{S}_k$ overshoots the zero level {\it from below}: $\tau_0:=0$ and $\tau_{n+1}:= \max \bigl\{ k > \tau_n: \widetilde{S}_k \le 0 \bigr\}$ for $n \ge 0$. It is readily seen that $\tau_n +1$ are stopping times. Denote $\theta_n := \tau_n- \tau_{n-1}$ and $\xi_n := \sum_{i=\tau_{n-1}+1}^{\tau_n} \widetilde{S}_i$, and let $\eta(N)$ be the number of overshoots of the zero level from below by the time $N$, namely, $$\eta(N):= \max \bigl\{k: \tau_k \le N \bigr\} = \max \bigl\{k: \sum_{i=1}^k \theta_i \le N \bigr\}.$$ Now, by analogy with \eqref{main estimate}, we write
\begin{equation} \label{main estimate'}
\P \Bigl \{ \min \limits_{1 \le k \le \eta(N)+1} \sum_{i=1}^k
\xi_i  \ge 0 \Bigr \} \le \widetilde{\P} \Bigl \{ \min \limits_{1 \le k \le N}
\sum_{i=1}^k S_i  \ge 0 \Bigr \} \le \P \Bigl \{ \min \limits_{1 \le k \le \eta(N)} \sum_{i=1}^k \xi_i  \ge 0 \Bigr \}.
\end{equation}

It is clear that the moments of overshoots $\tau_n$ partition the trajectory of $\widetilde{S}_k$ into ``cycles'' that consist of one weak positive and the consequent weak negative excursion (that is, nonnegative and nonpositive, respectively, but we will omit ``weak'' in what follows). Let $\theta_n^+ := \max \bigl\{ k > 0: \widetilde{S}_{\tau_{n-1}+k} \ge 0 \bigr\}$ and $\theta_n^- := \max \bigl\{ k > 0: \widetilde{S}_{\tau_{n-1}+\theta_n^+ + k} \le 0 \bigr\}$ be the lengthes and let $\xi_n^+ := \sum_{i=\tau_{n-1}+1}^{\tau_{n-1}+\theta_n^+} \widetilde{S}_i$ and $\xi_n^- :=
\sum_{i=\tau_{n-1}+\theta_n^+ + 1}^{\tau_n} \widetilde{S}_i$ be the areas of these excursions, respectively; obviously, $\xi_n=\xi_n^+ + \xi_n^-$ and $\theta_n=\theta_n^+ + \theta_n^-$. The following observation plays the key role in our paper.

\begin{lem} \label{IID}
Let $S_n$ be a centered random walk with a finite variance.

(a) If $S_n$ is integer-valued, then random vectors $(\xi_n^0, \theta_n^0)_{n \ge 1}$ are i.i.d.

(b) If $S_n$ is upper exponential, upper geometric, or right-continuous, then the random vectors $(\xi_n, \theta_n)_{n \ge 1}$ are i.i.d., $(\xi_n^+, \theta_n^+)_{n \ge 1}$ are i.i.d., and $(\xi_n^-, \theta_n^-)_{n \ge 1}$ are i.i.d. If, in addition, $S_n$ satisfies assumptions of Theorem~\ref{LOWER B}, then $(\xi_n^+, \theta_n^+)_{n \ge 1}$ and $(\xi_n^-, \theta_n^-)_{n \ge 1}$ are mutually independent.
\end{lem}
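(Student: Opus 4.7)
The plan is to combine the strong Markov property with a memoryless-overshoot argument at the relevant stopping times.

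For part (a), since $S_n$ is integer-valued, centered and has finite variance, it is recurrent at every integer, so all $\tau_n^0$ are finite a.s.\ and $S_{\tau_n^0}=0$. Being stopping times, the $\tau_n^0$ allow a direct application of the strong Markov property: the shifted walks $(S_{\tau_n^0+k}-S_{\tau_n^0})_{k\ge 0}$ are i.i.d.\ copies of $(S_k)_{k\ge 0}$ independent of $\mathcal{F}_{\tau_n^0}$. Since $(\xi_n^0,\theta_n^0)$ is a measurable functional of the $n$-th excursion, the i.i.d.\ claim follows.

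For part (b) the key auxiliary statement is the following overshoot lemma: for any a.s.\ finite stopping time $\sigma$ of the filtration generated by $\widetilde{S}$ with $\widetilde{S}_\sigma\le 0<\widetilde{S}_{\sigma+1}$, the value $\widetilde{S}_{\sigma+1}$ is distributed as $X_1^+$ and is independent of $\mathcal{F}_\sigma$. Indeed, writing $\widetilde{S}_\sigma=-y$ with $y\ge 0$, the overshoot equals $X_{\sigma+1}-y$ on $\{X_{\sigma+1}>y\}$, and in each of the three cases the conditional law $\mbox{Law}(X_{\sigma+1}-y\,|\,X_{\sigma+1}>y)$ coincides with $\mbox{Law}(X_1^+)$ independently of $y$: in the upper exponential and upper geometric cases this is the memoryless property, while in the right-continuous case the only possibility is $y=0$ and $X_{\sigma+1}=1$. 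Recurrence of centered finite-variance walks ensures that such $\sigma$ are a.s.\ finite.

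Applying this lemma at $\sigma=\tau_{n-1}$ and then invoking the strong Markov property at the stopping time $\tau_{n-1}+1$ shows that the starting point $\widetilde{S}_{\tau_{n-1}+1}$ of the $n$-th cycle is distributed as $X_1^+$ and independent of $\mathcal{F}_{\tau_{n-1}}$, while the post-$(\tau_{n-1}+1)$ increments are i.i.d.\ copies of $S_1$ independent of $\mathcal{F}_{\tau_{n-1}+1}$. Hence the data determining the $n$-th cycle have the same joint law as those of the first cycle and are independent of the earlier cycles, so the 4-tuples $(\xi_n^+,\theta_n^+,\xi_n^-,\theta_n^-)_{n\ge 1}$ are i.i.d. Reading off marginals yields all three i.i.d.\ statements in part (b).

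For the mutual independence under the hypotheses of Theorem~\ref{LOWER B}, the assumptions are symmetric in $S$ and $-S$, so $-\widetilde{S}$ satisfies an analogous overshoot property. Applying this symmetric lemma at $\sigma^+:=\tau_{n-1}+\theta_n^+$, the end of the $n$-th positive excursion, shows that the starting value of the $n$-th negative excursion is independent of $\mathcal{F}_{\sigma^+}$ and has a fixed distribution; a further use of the strong Markov property at $\sigma^++1$ then makes the entire $n$-th negative excursion independent of the $n$-th positive one, which gives mutual independence of the two sequences. The hard part will be articulating the overshoot lemma uniformly across the three cases, and verifying that the truncation to the positive half-trajectory used to define $\widetilde{S}$ does not interfere with the Markov structure; once these are in hand, the rest is a routine application of the strong Markov property.
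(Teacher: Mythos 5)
Your proof is correct and follows essentially the same approach as the paper: part (a) via the strong Markov property at returns to zero, and part (b) via the memoryless-overshoot property (which the paper cites from Feller's Example XII.4(a)) combined with the strong Markov property at the stopping times $\tau_n+1$. Your explicitly stated overshoot lemma and the treatment of the mutual-independence claim via the time-reversed overshoot at the end of the positive excursion are what the paper subsumes under ``The proof of the other statements is analogous.''
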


Note: from this point on, $S_n$ satisfies assumptions of Theorem~\ref{LOWER B} means that it satisfies assumptions of Part 1 or Part 2 of the theorem. The lemma, basically, shows that under the made assumptions, the cycles of the walk are i.i.d.

\begin{proof}
Part (a) is trivial. For Part (b), note that the overshoots over the zero level $X_n^+:=\widetilde{S}_{\tau_{n-1}+1}$ are i.i.d. and their common distribution is $\mbox{Law}(S_1|S_1>0)$, which is exponential, geometric, or $\delta_1$. This naturally follows from the memoryless property of these distributions; a proof could be found in Example XII.4(a) from Feller~\cite{Feller}. In the same way, we show that $X_n^+$ are independent from the ``past'' $\widetilde{S}_1, \dots, \widetilde{S}_{\tau_{n-1}}$. Now from $\xi_n =  \sum_{i=\tau_{n-1}+1}^{\tau_n} \widetilde{S}_i = \sum_{i=\tau_{n-1}+1}^{\tau_n} (X_n^+ + \widetilde{S}_i - \widetilde{S}_{\tau_{n-1}+1})$ and $\theta_n = \max \bigl\{ k > 0: X_n^+ + \widetilde{S}_{\tau_{n-1}+k} - \widetilde{S}_{\tau_{n-1}+1} \le 0 \bigr\}$ we see that $(\xi_n, \theta_n)$ are i.i.d. as $\tau_n +1$ are stopping times. The proof of the other statements is analogous.
\end{proof}

\section{Areas and durations of excursions and cycles} \label{Sec Properties}
We already explained in Sec.~\ref{SSec Heur} why it is important to study properties of the joint distribution of $\xi_1$ and $\theta_1$. Here we prove several crucial results on $(\xi_1, \theta_1)$, $(\xi_1^+, \theta_1^+)$, $(\xi_1^-, \theta_1^-)$, and $(\xi_1^0, \theta_1^0)$, which are used in the proofs of Theorems~\ref{UPPER B} and~\ref{LOWER B}.

We start with a surprising lemma which allows us, in certain cases, to reduce a complicated study of the joint distribution of $(\xi_1, \theta_1)$ to a much simpler consideration of its marginal distributions.

\begin{lem}
\label{EXP SYMM}
Let $S_n$ be a centered random walk with a finite variance. If $S_n$ is upper exponential, then the distribution of $\xi_1$ is symmetric, and moreover, $(\xi_1, \theta_1) \stackrel{\D}{=} (-\xi_1, \theta_1)$ and $(\xi_1^+, \theta_1^+, \xi_1^-, \theta_1^-) \stackrel{\D}{=} (-\xi_1^-, \theta_1^-, -\xi_1^+, \theta_1^+)$. If $S_n$ is integer-valued, then the distribution of $\xi_1^0$ is symmetric, and moreover, $(\xi_1^0, \theta_1^0) \stackrel{\D}{=} (-\xi_1^0, \theta_1^0)$.
\end{lem}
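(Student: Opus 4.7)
My plan is to establish both halves of the lemma by exhibiting a measure-preserving involution on cycle paths that negates the area while preserving the cycle-length statistics.

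For the integer-valued case, the conditional distribution of the increments $(X_1, \ldots, X_{\theta_1^0})$ on the event $\{\theta_1^0 = n\}$ is the product of the increment measure restricted to the cycle event. If $\P \{S_1 = 0\} = 0$, the cycle event $\{S_n = 0,\, S_k \neq 0 \text{ for } 1 \le k < n\}$ is invariant under reversal of the order of the increments, so $(X_1, \ldots, X_n) \mapsto (X_n, \ldots, X_1)$ is a measure-preserving involution; on $\{S_n = 0\}$ the reversed walk has partial sums $\widehat{S}_k = -S_{n-k}$, and a short calculation shows $\xi_1^0 \mapsto -\xi_1^0$. When $\P \{S_1 = 0\} > 0$, I would first condition on the number $J$ of initial zero increments of the cycle and then apply the reversal only to the subsequent proper excursion $(X_{J+1}, \ldots, X_n)$, which carries all the mass of $\xi_1^0$; the argument otherwise goes through unchanged.

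For the upper exponential case, I would first write the joint density of $(\widetilde{S}_1, \ldots, \widetilde{S}_n, \widetilde{S}_{n+1})$ on the event $\{\theta_1^+ = m,\, \theta_1^- = n - m\}$ as
$$\lambda e^{-\lambda s_1} \prod_{i=2}^{n+1} f(s_i - s_{i-1}),$$
where $\lambda e^{-\lambda s_1}$ is the density of $X_1^+$ (exponential with rate $\lambda$), $f$ is the density of the walk's increments, and the expression is restricted to $s_1, \ldots, s_m \ge 0$, $s_{m+1}, \ldots, s_n \le 0$, $s_{n+1} > 0$. Integrating out the terminal overshoot $s_{n+1}$ using the upper exponential form $f(y) = a_+ \lambda e^{-\lambda y}$ for $y > 0$ together with $s_n \le 0$ produces the marginal density
$$h(s_1, \ldots, s_n) = a_+ \lambda e^{-\lambda (s_1 - s_n)} \prod_{i=2}^n f(s_i - s_{i-1}).$$
The key observation is that $h$ is invariant under the involution $T(s_1, \ldots, s_n) := (-s_n, -s_{n-1}, \ldots, -s_1)$: the exponential prefactor is preserved since $T$ sends $(s_1, s_n)$ to $(-s_n, -s_1)$ and so fixes $s_1 - s_n$, while the product of increment densities is merely reindexed. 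Under $T$ the sign pattern is swapped so that the image cycle has positive excursion of length $n - m$ and negative excursion of length $m$; a direct computation of areas gives $\xi_1^+ \mapsto -\xi_1^-$ and $\xi_1^- \mapsto -\xi_1^+$. This yields $(\xi_1^+, \theta_1^+, \xi_1^-, \theta_1^-) \stackrel{\D}{=} (-\xi_1^-, \theta_1^-, -\xi_1^+, \theta_1^+)$; summing the corresponding components gives $(\xi_1, \theta_1) \stackrel{\D}{=} (-\xi_1, \theta_1)$, and the symmetry of $\xi_1$ follows.

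The main obstacle is the density-invariance computation in the upper exponential case: one must carefully combine the exponential prefactor $\lambda e^{-\lambda s_1}$ coming from $X_1^+$ with the factor $a_+ e^{\lambda s_n}$ generated by the memoryless property at the terminal overshoot, producing the symmetric expression $e^{-\lambda (s_1 - s_n)}$. It is this cooperation of exponentiality at the two ``endpoints'' of the cycle that makes the involution $T$ preserve the density; the symmetry would fail if either end lacked the exponential property. In the integer-valued case the only delicate point is handling possible initial zero increments, which is resolved by restricting the reversal to the proper excursion.
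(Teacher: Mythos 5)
Your proof is correct and follows essentially the same route as the paper: both exploit path reversal-plus-negation of the cycle, and both rely on the same cancellation whereby the exponential density of the initial overshoot $X_1^+$ combines with the memoryless integration over the terminal overshoot to produce the symmetric factor $e^{-\lambda(s_1 - s_n)}$ (the paper writes this as $a_+ e^{x_{i+j} - x_1}$ with $\lambda = 1$, then appeals to duality of random walks for the invariance of the conditional bridge law). The integer-valued case is likewise handled by increment reversal on the proper excursion after peeling off initial zeros, exactly as in the paper.
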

\begin{proof}
Let us start with the upper exponential case assuming, without loss of generality, that $\mbox{Law} (X | X>0)$ is a standard exponential distribution. Since $\xi_1 = \widetilde{S}_1 + \dots + \widetilde{S}_{\theta_1}$, it suffices to show that for each $i,j \ge 1$, the measures $\P \bigl \{(\widetilde{S}_1, \dots , \widetilde{S}_{\theta_1}) \in \cdot \, , \theta_1^+ =i, \theta_1^-=j \bigr\}$ and $\P \bigl\{(-\widetilde{S}_{\theta_1}, \dots, -\widetilde{S}_1) \in \cdot \, , \theta_1^+ =j, \theta_1^-=i \bigr\}$ coincide. This statement follows from the observation that for any $x_1, \dots, x_i >0$ and $x_{i+1}, \dots, x_{i+j} < 0$,
\begin{eqnarray*}
&&\P \bigl \{\widetilde{S}_1 \in d x_1, \dots, \widetilde{S}_{i+j} \in d x_{i+j}, \theta_1^+ =i, \theta_1^-=j \bigr\} \\
&=& a_+ e^{x_{i+j}-x_1} \E \bigl \{S_2 \in d x_2, \dots, S_{i+j-1} \in d x_{i+j-1} \bigl| \bigr. S_1 = x_1, S_{i+j} = x_{i+j} \bigr\}  d x_1 d x_{i+j}
\end{eqnarray*}
and
\begin{eqnarray*}
&&\P \bigl \{\widetilde{S}_{i+j} \in - d x_1, \widetilde{S}_{i+j-1} \in -d x_2, \dots, \widetilde{S}_1 \in  - d x_{i+j}, \theta_1^+ =j, \theta_1^-=i \bigr\} \\
&=& a_+ e^{x_{i+j}-x_1} \E \bigl \{S_2 \in -d x_{i+j-1}, \dots, S_{k-1} \in -d x_2 \bigl| \bigr. S_1 = - x_{i+j}, S_{i+j} = -x_1 \bigr\} d x_1 d x_{i+j} .
\end{eqnarray*}
Indeed, the conditional expectations in the right hand sides coincide for any random walk: this is, essentially, the well-known property of duality of random walks.

The proof for the lattice case is analogous: since $\xi_1^0 = S_1 + \dots +
S_{\theta_1^0}$, use that for any $i \ge 0, j \ge 1$, and any integer $x_{i+1}, \dots, x_{i+j} \neq 0$, it holds that
\begin{eqnarray*}
&&\P \bigl \{S_1 = \dots = S_i=0, S_{i+1} =x_{i+1}, \dots, S_{i+j} = x_{i+j} , S_{i+j+1} = 0 \bigr\} \\
&=& \P \bigl \{S_1 = \dots = S_i=0, S_{i+1} =-x_{i+j}, \dots, S_{i+j} = -x_{i+1} , S_{i+j+1} = 0\bigr\}
\end{eqnarray*}
for any random walk.
\end{proof}

Note that the distribution of $\xi_1$ is not symmetric even for two-sided geometric random walks unless $a_- = a_+$. The proof presented above for the upper exponential case does not work here because two-sided geometric walks can return to zero.

In order to state the next result, recall that r.v.'s $Y_1, \dots, Y_k$ are {\it associated} if $$cov \bigl(f(Y_1, \dots, Y_k), \,g(Y_1, \dots, Y_k) \bigr) \ge 0$$ for any coordinate-wise nondecreasing functions $f,g: \R^k \to \R$ such that the covariance is well defined. An infinite set of r.v.'s is associated if any finite subset of its
variables is associated. The following sufficient conditions of association are well
known, see Esary~et~al.~\cite{Esary}:
\begin{enumerate}
\item[(a)] A set consisting of a single r.v. is associated.

\item[(b)] Independent r.v.'s are associated.

\item[(c)]  Coordinate-wise nondecreasing functions (of a finite number of
variables) of associated r.v.'s are associated.

\item[(d)]  If $Y_{1,u}, \dots, Y_{k,u}$ are associated for every $u$ and $(Y_{1,u}, \dots, Y_{k,u}) \stackrel{\D}{\longrightarrow} (Y_1, \dots,
Y_k)$ as $u \to \infty$, then $Y_1, \dots, Y_k$ are associated.

\item[(e)]  If two sets of associated variables are independent, then the
union of these sets is also associated.
\end{enumerate}

We now state the other result that allows us, in some cases, to proceed from study of the joint distribution of $(\xi_1, \theta_1)$ to a consideration of the distributions of $\xi_1$ and $\theta_1$.

\begin{lem}
\label{ASSOC}
Under assumptions of Theorem~\ref{LOWER B}, the random
variables $\{ \xi_n, \theta_n^+\}_{n \ge 1}$ are associated.
\end{lem}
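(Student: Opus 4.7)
The plan is to combine the closure rules (a)--(e) with the independence structure from Lemma~\ref{IID}, reducing the full statement to the core claim that a single positive-excursion pair $(\xi_1^+, \theta_1^+)$ is associated.

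I first establish association of $(\xi_1^+, \theta_1^+)$. The positive excursion is driven by a family of independent random variables: the initial overshoot $X_1^+ = \widetilde S_1$ and the subsequent increments $\Delta_i := S_i - S_{i-1}$ for $i \ge 2$ are mutually independent. Writing $Y_k := \widetilde S_k = X_1^+ + \sum_{i=2}^k \Delta_i$, each $Y_k$ is a coordinate-wise nondecreasing function of $(X_1^+, \Delta_2, \ldots, \Delta_N)$, which is associated by (b); hence $(Y_1, \ldots, Y_N)$ is associated by (c). I then introduce the truncations
\[
\theta_1^{+,(N)} := \max\{\, 0 \le k \le N : Y_1, \ldots, Y_k \ge 0 \,\}, \qquad \xi_1^{+,(N)} := \sum_{i=1}^{\theta_1^{+,(N)}} Y_i,
\]
and verify that both are coordinate-wise nondecreasing in $(Y_1, \ldots, Y_N)$. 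For $\theta_1^{+,(N)}$ this is immediate; for $\xi_1^{+,(N)}$ the increment under raising a single coordinate splits into two nonnegative contributions, since any newly included terms are nonnegative by the definition of $\theta_1^{+,(N)}$ at the larger path. Property (c) then gives association of $(\xi_1^{+,(N)}, \theta_1^{+,(N)})$, and since $\theta_1^+ < \infty$ a.s.\ (the walk is centered with finite variance), sending $N \to \infty$ and invoking (d) yields association of $(\xi_1^+, \theta_1^+)$.

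To conclude, I glue the cycles together using Lemma~\ref{IID}, which under the assumptions of Theorem~\ref{LOWER B} provides: (i) the quadruples $(\xi_n^+, \theta_n^+, \xi_n^-, \theta_n^-)$ are i.i.d.\ across $n$, so distinct cycles are mutually independent, and (ii) within each $n$, $(\xi_n^+, \theta_n^+)$ is independent of $(\xi_n^-, \theta_n^-)$. Because association depends only on the joint law, each pair $(\xi_n^+, \theta_n^+)$ is associated by the previous paragraph, and (b) and (e) applied to the independent blocks in (i) and (ii) then yield association of the enlarged family $\{\xi_n^+, \theta_n^+, \xi_n^-\}_{n \ge 1}$. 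A final application of (c) to the coordinate-wise nondecreasing maps $(\xi_n^+, \xi_n^-, \theta_n^+) \mapsto \xi_n^+ + \xi_n^- = \xi_n$ and $(\xi_n^+, \xi_n^-, \theta_n^+) \mapsto \theta_n^+$ delivers the desired association of $\{\xi_n, \theta_n^+\}_{n \ge 1}$.

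The one delicate point is the coordinate-wise monotonicity of $\xi_1^{+,(N)}$ in the $Y_j$'s; every other step is routine bookkeeping with the closure rules (a)--(e) against the independence structure built into Lemma~\ref{IID}.
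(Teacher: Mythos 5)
Your proposal is correct and takes essentially the same route as the paper: both prove association of $(\xi_1^+,\theta_1^+)$ by expressing truncated versions as coordinate-wise nondecreasing functions of the independent increments (the paper truncates via $\min\{k,\theta_1^+\}$ directly, you via $\theta_1^{+,(N)}$ through the intermediate partial sums $Y_j$, but these are equivalent), pass to the limit with closure rule (d), and then glue in $\xi_1^-$ by independence via (e) and form $\xi_1=\xi_1^++\xi_1^-$ via (c), finally extending to the whole i.i.d.\ sequence. Your verification of the monotonicity of $\xi_1^{+,(N)}$ in the $Y_j$'s is the same observation the paper leaves implicit, and it is correct.
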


\begin{proof}
We first show that $\xi_1^+$ and $\theta_1^+$ are associated. Indeed, by (b) and (c), the r.v.'s $\sum_{i=1}^{\min\{k, \theta_1^+\}} \widetilde{S}_i$ and $\min\{k, \theta_1^+\}$ are associated for each $k$ as coordinate-wise nondecreasing functions of the first $k$
independent increments of the walk. Since $\bigl (\sum_{i=1}^{\min\{k, \theta_1^+\}} \widetilde{S}_i, \min\{k, \theta_1^+\} \bigr ) \to \bigl (\xi_1^+, \theta_1^+ \bigr )$ with probability 1 as $k \to \infty$, $\xi_1^+$ and $\theta_1^+$ are associated by (d).

Now $\xi_1^+, \xi_1^-, \theta_1^+$ are associated by (a) and (e) because $\xi_1^-$ is independent of $\xi_1^+$ and $\theta_1^+$, and then $\xi_1 = \xi_1^+ + \xi_1^-$ and $\theta_1^+$ are also associated by (c). This concludes the proof of the lemma since $(\xi_n, \theta_n)_{n \ge 1}$ are i.i.d.
\end{proof}

The following Proposition~\ref{TAILS} describes the ``tails'' of $\xi_1$ and $\theta_1$. The proposition consists of two Parts (a) and (b). We stress that only Part (a), whose proof is straightforward, is used to prove Theorem~\ref{UPPER B} and Part 2 of Theorem~\ref{LOWER B}. The proof of Part 1 of Theorem~\ref{LOWER B} requires more complicated Corollary~\ref{COR XI} of Part (b). Although Part (b) itself is not used directly in the proofs of our main results, it is interesting because of its Corollary~\ref{COR HAAN} and because it generalizes the crucial Theorem~1 of Sinai~\cite{Sinai}.

Let $\xi_{ex}:= \int_0^1 W_{ex} (u) du$ be the area of a standard Brownian excursion. The latter is defined as $W_{ex} (u) := (\overline{\nu}-\underline{\nu})^{-1/2} \bigl|W(\underline{\nu} + u(\overline{\nu}-\underline{\nu}))\bigr|$, where $W(u)$ is a standard Brownian motion,  $\underline{\nu}$ is the last zero of $W(u)$ before $1$ and $\overline{\nu}$ is the first zero after $1$. For $x \ge 0$, put $$F(x) := \E \min \bigl \{ x^{-1/3} \xi_{ex}^{1/3}, 1 \bigr \}.$$ Clearly, $F(x)$ is decreasing, $F(0)=1$, and $F(\infty)=0$. By Janson~\cite{Janson}, $\xi_{ex}$ is continuous and has finite moments of any order, so $F(x)$ is continuous and, by $F(x)= x^{-1/3} \E \min \bigl \{ \xi_{ex}^{1/3}, x^{1/3} \bigr \} $, we have $\lim \limits_{x \to \infty} x^{1/3} F(x)=\E \xi_{ex}^{1/3} < \infty$.

\begin{prop}\label{TAILS}
Let $S_n$ be a centered random walk with a finite variance.

(a) $\theta_1^+$ belongs to the domain of normal attraction of a spectrally positive $\alpha$-stable law with exponent $1/2$, and the same holds for $\theta_1^0$ if $S_n$ is integer-valued.

(b) If $S_n$ satisfies assumptions of Theorem~\ref{LOWER B} or $S_n$ is upper exponential, then for any $s,t \ge 0$ such that $s+t>0$ it holds that
\begin{eqnarray}
&& \lim_{n \to \infty} n^{1/2} \P \bigl \{ \xi_1^+ > sn^{3/2}, \theta_1^+ > tn \bigr \}
= \lim_{n \to \infty} n^{1/2} \P \bigl \{ \xi_1^- < -sn^{3/2}, \theta_1^- > tn \bigr \} \notag \\
&=& \lim_{n \to \infty} n^{1/2} \P \bigl \{ \xi_1 > sn^{3/2}, \theta_1 > tn \bigr \} =
\lim_{n \to \infty} n^{1/2} \P \bigl \{ \xi_1 < -sn^{3/2}, \theta_1 > tn \bigr \} \notag \\
&=& C_{Law(S_1)} \, t^{-1/2} F(\sigma s t^{-3/2}) \label{2d tails},
\end{eqnarray}
where $C_{Law(S_1)}=\frac{(1-a_0) \E|S_1|}{\sqrt{2 \pi} a_+ a_- \sigma}$ or $C_{Law(S_1)} = \sqrt{\frac{2}{\pi}} \frac{\sigma}{\E|S_1|}$, respectively. The right-hand side of (\ref{2d tails}) at $t=0$ is defined by continuity. If $S_n$ is integer-valued and the lattice span of $S_1$ is $1$, then
\begin{equation} \label{2d tails0}
\lim_{n \to \infty} n^{1/2} \P \bigl \{ \xi_1^0 > sn^{3/2}, \theta_1^0 > tn \bigr \} = \lim_{n \to \infty} n^{1/2} \P \bigl \{ \xi_1^0 < -sn^{3/2}, \theta_1^0 > tn \bigr \} =  \frac{\sigma}{\sqrt{2 \pi t}} F(\sigma s t^{-3/2}).
\end{equation}
\end{prop}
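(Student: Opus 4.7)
My plan is to treat Part~(a) via classical marginal tail results and Part~(b) through an invariance principle for excursions combined with a transfer from the positive-excursion tail to the tail of $(\xi_1,\theta_1)$.

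For Part~(a), the classical estimate $\P(S_1\ge 0,\ldots,S_n\ge 0)\sim c n^{-1/2}$ for centered finite-variance walks (Feller vol.~II, Ch.~XII, or via the Sparre Andersen equivalence together with a local CLT) transfers to $\P(\theta_1^+ > n)\sim C_{Law(S_1)} n^{-1/2}$ after conditioning on the first positive jump $X_1^+$ and applying dominated convergence; the constant matches $C_{Law(S_1)}$ evaluated at $s=0$ in~\eqref{2d tails}. This tail, together with $\theta_1^+\ge 1$, characterizes the domain of normal attraction of a spectrally positive $\alpha$-stable law with $\alpha=1/2$. The analogous classical tail for the return time to zero of a recurrent centered lattice walk yields the same conclusion for $\theta_1^0$.

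For Part~(b), the crucial input is the invariance principle for a random walk conditioned to form an excursion of fixed length: conditional on $\theta_1^+=k$ with $k\to\infty$, the rescaled trajectory $\bigl(\widetilde{S}_{\lfloor ku\rfloor}/(\sigma\sqrt{k})\bigr)_{u\in[0,1]}$ converges weakly to a standard Brownian excursion $W_{ex}$, hence by continuous mapping $\xi_1^+/(\sigma(\theta_1^+)^{3/2})\mid\theta_1^+=k\Rightarrow \xi_{ex}$. Combined with the moment bounds on $\xi_{ex}$ from Janson~\cite{Janson} and a uniform integrability estimate on the normalized area, I write
\[
n^{1/2}\P\bigl(\xi_1^+ > sn^{3/2},\theta_1^+ > tn\bigr)=n^{1/2}\int_{tn}^\infty \P\bigl(\xi_1^+ > sn^{3/2}\mid\theta_1^+=k\bigr)\,\P(\theta_1^+\in dk),
\]
substitute $k=un$, use the density asymptotic $\P(\theta_1^+\in dk)\sim C_{Law(S_1)}\,dk/(2k^{3/2})$ from Part~(a), and pass the limit inside by dominated convergence. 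A change of variables identifies the resulting expression with $C_{Law(S_1)}\,t^{-1/2}F(\sigma s t^{-3/2})$.

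To transfer to the cycle $(\xi_1,\theta_1)$, I use that $(\xi_1^+,\theta_1^+)$ and $(\xi_1^-,\theta_1^-)$ are independent under the hypotheses by Lemma~\ref{IID}, while $\xi_1^-\le 0$. The event $\{\xi_1 > sn^{3/2},\theta_1 > tn\}$ is sandwiched between $\{\xi_1^+ > sn^{3/2},\theta_1^+ > tn-\theta_1^-\}$ and $\{\xi_1^+ > sn^{3/2}+|\xi_1^-|,\theta_1^+ > tn\}$; conditioning on $(\xi_1^-,\theta_1^-)$, applying the positive-excursion asymptotic from the previous step to the inner probability, and integrating, both bounds yield the same limit because on an event of asymptotic probability one the negative excursion contributes $o(n)$ to $\theta_1$ and $o(n^{3/2})$ to $|\xi_1|$ relative to the positive excursion that saturates the joint tail. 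The mirror statements for $(-\xi_1,\theta_1)$ and $(\xi_1^-,\theta_1^-)$ follow from Lemma~\ref{EXP SYMM} in the upper exponential case, and from the pos/neg symmetry built into the assumptions of Theorem~\ref{LOWER B} otherwise. Equation~\eqref{2d tails0} is obtained by the same scheme applied to $(\xi_1^0,\theta_1^0)$, using the invariance principle for a lattice walk's excursion between two consecutive zeros and the return-time constant $\sigma/\sqrt{2\pi}$. The main obstacle is the uniform integrability required to pass the conditional limit $\xi_1^+/(\sigma(\theta_1^+)^{3/2})\Rightarrow\xi_{ex}$ through the integral in the joint-tail calculation; a secondary subtlety is that $\theta_1^+$ and $\theta_1^-$ have tails of the same order, so establishing negligibility of the negative excursion's contribution on the relevant event requires a joint-distribution argument rather than a bare moment bound.
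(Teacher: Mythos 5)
Your Part~(a) matches the paper's argument in substance. For Part~(b), however, your route diverges from the paper's in a way that opens three real gaps.

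First, you condition on the exact excursion length, writing $n^{1/2}\int_{tn}^\infty \P(\xi_1^+>sn^{3/2}\mid\theta_1^+=k)\,\P(\theta_1^+\in dk)$ and substituting ``$\P(\theta_1^+\in dk)\sim C\,dk/(2k^{3/2})$.'' That is a \emph{local} limit theorem for $\theta_1^+$, which is strictly stronger than the tail asymptotic established in Part~(a) and is not available for free (it requires aperiodicity and a separate argument, e.g.\ Eppel's local theorem for ladder epochs). You also need the invariance principle for the walk conditioned on $\{\theta_1^+=k\}$ exactly, which is Kaigh's theorem in the lattice case but has no ready-made analogue for the upper exponential (non-lattice) walks covered by the statement. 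The paper deliberately avoids both obstacles by using Shimura's conditional weak convergence on the event $\{\tau_+>\varepsilon n\}$: no local limit theorem is needed, and the Brownian-excursion identification is then extracted a posteriori.

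Second, your transfer from $(\xi_1^+,\theta_1^+)$ to $(\xi_1,\theta_1)$ asserts that $(\xi_1^+,\theta_1^+)$ and $(\xi_1^-,\theta_1^-)$ are independent ``under the hypotheses by Lemma~\ref{IID},'' but Lemma~\ref{IID} only yields that independence under the assumptions of Theorem~\ref{LOWER B}. Proposition~\ref{TAILS}(b) also covers the purely upper exponential case, where positive and negative excursion are \emph{not} independent (the starting level of the negative excursion is the overshoot below zero, which depends on how the positive excursion ended). The paper handles this case separately (Part~III of its proof) by bounding $\P\{\theta_1^+>n,\theta_1^->n\}$ via an independent copy of the walk together with Eppel's Lemma~4 on the overshoot.

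Third, your sandwiching of $\{\xi_1>sn^{3/2},\theta_1>tn\}$ does not close when $s=0$ (and $s=0$ is included in the statement). The upper set is then $\{\theta_1^+>tn-\theta_1^-\}=\{\theta_1>tn\}$, whose rescaled tail limit is $2C_{Law(S_1)}\,t^{-1/2}$, twice the asserted value $C_{Law(S_1)}\,t^{-1/2}$. Your heuristic that ``on an event of asymptotic probability one the negative excursion contributes $o(n)$ to $\theta_1$'' is not correct: $\theta_1^+$ and $\theta_1^-$ have tails of the same order, so given $\theta_1>tn$ it is the \emph{negative} excursion that is large with conditional probability tending to $1/2$. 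You flag this as a ``secondary subtlety'' but it is precisely where a new idea is required. The paper resolves it by first showing $\lim n^{1/2}\P\{\theta_1>n\}=\lim n^{1/2}\P\{\theta_1^+>n\}+\lim n^{1/2}\P\{\theta_1^->n\}$ (via negligibility of the cross event $\{\theta_1^+>n,\theta_1^->n\}$), and then using the symmetry $(\xi_1,\theta_1)\stackrel{\D}{=}(-\xi_1,\theta_1)$ from Lemma~\ref{EXP SYMM} to split $\P\{\theta_1>tn\}$ into the two halves $\P\{\xi_1>0,\theta_1>tn\}$ and $\P\{\xi_1<0,\theta_1>tn\}$. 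For $s>0$ your sandwich does converge on both sides to the stated limit, but the $s=0$ case needs the symmetry argument, not a sandwich.
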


\begin{cor}
\label{COR XI}
Suppose $S_n$ satisfies assumptions of Theorem~\ref{LOWER B}. Then $\xi_1$ belongs to the domain of normal attraction of a symmetric $\alpha$-stable law with exponent $1/3$.
\end{cor}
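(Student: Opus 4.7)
The plan is to read off the univariate tails of $\xi_1$ from Proposition~\ref{TAILS}(b), specialized at the boundary $t = 0$, and then apply the classical Gnedenko--Kolmogorov criterion (see Feller~\cite{Feller}) identifying the domain of normal attraction of a stable law.

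First I would evaluate the right-hand side of~\eqref{2d tails} at $t = 0$. The asymptotic $\lim_{x \to \infty} x^{1/3} F(x) = \kappa$, with $\kappa := \E \xi_{ex}^{1/3} < \infty$ (finiteness by Janson~\cite{Janson}), has already been recorded in the paragraph preceding the proposition; it yields, for every fixed $s > 0$,
$$\lim_{t \to 0^+} t^{-1/2} F(\sigma s t^{-3/2}) \;=\; \kappa\,(\sigma s)^{-1/3}.$$
This is the continuity extension of~\eqref{2d tails} at $t = 0$ promised by the proposition. Since $\theta_1 \ge 1$ almost surely, $\{\theta_1 > tn\}$ coincides with the full probability space whenever $0 < t < 1/n$, so this extension reads
$$\lim_{n \to \infty} n^{1/2}\,\P\{\xi_1 > sn^{3/2}\} \;=\; C\,s^{-1/3} \;=\; \lim_{n \to \infty} n^{1/2}\,\P\{\xi_1 < -sn^{3/2}\}, \quad s > 0,$$
with $C := C_{Law(S_1)}\,\kappa\,\sigma^{-1/3}$; the equality of the two tails is explicit in~\eqref{2d tails}.

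Next I would promote this subsequential control to a genuine tail asymptotic by monotonicity. Given $x \to \infty$, set $n := \lfloor x^{2/3} \rfloor$; then $n^{3/2} \le x < (n+1)^{3/2}$ and $x^{1/3}/n^{1/2} \to 1$, so the sandwich
$$\P\{\xi_1 > (n+1)^{3/2}\} \;\le\; \P\{\xi_1 > x\} \;\le\; \P\{\xi_1 > n^{3/2}\},$$
multiplied by $x^{1/3}$, gives $\P\{\xi_1 > x\} \sim C\,x^{-1/3}$. The identical argument applied to the left tail yields $\P\{\xi_1 < -x\} \sim C\,x^{-1/3}$.

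Hence $\xi_1$ has balanced power-law tails of exponent $-1/3$ with no slowly varying correction, which by the Gnedenko--Kolmogorov criterion places it in the domain of normal attraction of a symmetric $\alpha$-stable law with $\alpha = 1/3$. I do not foresee any substantive obstacle: the deep input is Proposition~\ref{TAILS}(b), used here as a black box, and the rest is the one-line boundary computation together with the standard monotonicity upgrade.
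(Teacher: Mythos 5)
Your proposal is correct and is exactly the deduction the paper intends: the corollary is stated without proof as an immediate consequence of Proposition~\ref{TAILS}(b), and you have simply spelled out the standard steps (evaluate the right-hand side of~\eqref{2d tails} at $t=0$ via $\lim_{x\to\infty}x^{1/3}F(x)=\E\xi_{ex}^{1/3}$, upgrade the subsequential limit along $x=n^{3/2}$ to a genuine tail asymptotic by monotonicity, and invoke the classical criterion for the domain of normal attraction of an $\alpha$-stable law, with symmetry coming from the equality of the two tail constants in~\eqref{2d tails}). No gaps.
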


As an immediate consequence of de Haan et al.~\cite{de Haan+}, we have the following.

\begin{cor}
\label{COR HAAN}
Under conditions of Part (b) of Proposition~\ref{TAILS},
$$\Bigl (\frac{\xi_1 + \dots + \xi_n}{n^3}, \frac{\theta_1 + \dots + \theta_n}{n^2} \Bigr) \stackrel{\D}{\longrightarrow} (\xi, \theta),$$ where $Law(\theta)$ is spectrally positive $\alpha$-stable with exponent $1/2$ and $Law(\xi)$ is symmetric $\alpha$-stable with exponent $1/3$. The same holds for sums of $\xi_i^0$ and $\theta_i^0$.
\end{cor}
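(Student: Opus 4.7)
The plan is to read Corollary~\ref{COR HAAN} as a direct application of the multivariate stable limit theorem of de~Haan et al.~\cite{de Haan+}. That theorem characterizes, for i.i.d.\ sequences of random vectors in $\R^d$, joint convergence under coordinate-wise affine normalization to an operator-stable limit purely in terms of vague convergence of the appropriately rescaled tail measures of a single summand, allowing different tail indices on different coordinates. Both ingredients needed are already in hand: Lemma~\ref{IID}(b) says that $(\xi_n, \theta_n)_{n \ge 1}$ are i.i.d., and Proposition~\ref{TAILS}(b) supplies the joint tail asymptotics.

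The execution would proceed in three steps. First, I would rewrite the rectangular tail estimates in (\ref{2d tails}) in the form required by~\cite{de Haan+}: after the coordinate change $(\xi_1, \theta_1) \mapsto (\xi_1/n^3,\, \theta_1/n^2)$, the four assertions in (\ref{2d tails}) (for $\xi_1$ of either sign) combine, upon the substitution $n \mapsto n^{1/2}$, to give for each rectangle $R = (s, \infty) \times (t, \infty)$ with $(s,t) \in (\R \setminus \{0\}) \times [0, \infty)$ the vague-limit statement
$$n\, \P \bigl\{ (\xi_1/n^3,\, \theta_1/n^2) \in R \bigr\} \longrightarrow \nu(R),$$
where $\nu$ is the positive Radon measure on $(\R \times [0,\infty)) \setminus \{(0,0)\}$ whose rectangular values are determined by the right-hand side of (\ref{2d tails}) (symmetric in $s$). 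This is exactly the hypothesis for vague convergence of tail measures in~\cite{de Haan+}. Second, invoking the main theorem there yields the claimed joint weak limit $(\xi, \theta)$, whose distribution is operator-stable with exponent measure $\nu$. Third, I would identify the marginals: Proposition~\ref{TAILS}(a) gives that $Law(\theta)$ is spectrally positive $\alpha$-stable with exponent $1/2$, and Corollary~\ref{COR XI} gives that $Law(\xi)$ is symmetric $\alpha$-stable with exponent $1/3$, both of which agree with the marginal projections of $\nu$.

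The main technical obstacle is checking that the measure $\nu$ is a bona fide exponent measure in the sense of~\cite{de Haan+}: one must verify Radon-ness on $(\R \times [0,\infty)) \setminus \{(0,0)\}$ and the correct scaling homogeneity $\nu(a^3 \cdot, a^2 \cdot) = a^{-1} \nu(\cdot,\cdot)$. Both follow at once from the continuity and monotonicity properties of $F$ recorded before the statement of Proposition~\ref{TAILS}, in particular from the relation $\lim_{x \to \infty} x^{1/3} F(x) = \E \xi_{ex}^{1/3} < \infty$, which controls $\nu$ near the $\{s=0\}$ axis. Finally, the $(\xi_n^0, \theta_n^0)$ variant (which is the relevant one in the lattice case) is handled verbatim, replacing (\ref{2d tails}) by (\ref{2d tails0}) and Lemma~\ref{IID}(b) by Lemma~\ref{IID}(a).
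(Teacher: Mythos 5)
Your proposal is correct and matches the paper's (unstated) argument: the paper simply declares the corollary an immediate consequence of de~Haan et al., and you have supplied exactly the verification steps — recasting the rectangular tail asymptotics of Proposition~\ref{TAILS}(b) under the normalization $(n^3, n^2)$, checking the exponent-measure hypotheses, and identifying the marginals — that make that citation rigorous. (Minor nit: the substitution in the tail formula should read $n \mapsto n^2$, not $n \mapsto n^{1/2}$, but the resulting scaling is what you wrote.)
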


Before we get to the proofs, recall some important facts on ladder variables of random walks from Feller~\cite{Feller}. For any random walk $U_n$, define the first descending and ascending ladder epochs as $\tau_+:= \min\{k >0: U_k<0\}$ and $\tau_-:= \min\{k >0: U_k>0\}$, respectively, where by definition $\min_{\varnothing} := \infty$. We introduce such notations considering $\tau_+$ as the duration of the first {\it positive excursion} of $U_k$ (increased by one of course) rather than the first moment when $U_k$ becomes negative. It is readily seen that
\begin{equation}
\label{tau^+ -- min} \P \bigl \{ \tau_+ > n \bigr\} = \P \Bigl \{ \min \limits_{1 \le i
\le n} U_i  \ge 0 \Bigr \}.
\end{equation}

Denote $$c_+:=\sum_{n=1}^\infty \frac1n \bigl( \P \{ U_n > 0\} - 1/2 \bigr), \quad c_0:=\sum_{n=1}^\infty \frac1n \P \{ U_n = 0\} , \quad c_-:=\sum_{n=1}^\infty \frac1n \bigl( \P \{ U_n < 0\} - 1/2 \bigr)$$ if the sums are well-defined. If $c_+$ and $c_-$ are finite, then
\begin{equation}\label{ladder moments}
\lim_{n \to \infty} n^{1/2} \P \bigl \{ \tau_+ > n \bigr\} = \frac{e^{c_+ + \, c_0}}{\sqrt{\pi}}, \quad \lim_{n \to \infty} n^{1/2} \P \bigl \{ \tau_- > n \bigr\} = \frac{e^{c_- + \, c_0}}{\sqrt{\pi}}.
\end{equation}
It is known that $c_0$ is {\it always} finite while $c_+$ and $c_-$
are finite if $\E U_1 = 0$ and $0 < \V U_1 =: \sigma^2 < \infty$. Under the latter
conditions, we also have
\begin{equation}
\label{ladder heights}
\E U_{\tau_+} = -\frac{\sigma}{\sqrt{2}} e^{c_+ + c_0}, \quad \E
U_{\tau_-} = \frac{\sigma}{\sqrt{2}} e^{c_- + c_0}
\end{equation}
for the ladder heights $U_{\tau_+}$ and $U_{\tau_-}$. Finally, if $\P \{ U_n > 0\} \to
1/2$, then
\begin{equation} \label{tau_+ tail l}
\P \bigl \{ \tau_+ > n \bigr\} \sim n^{-1/2} L(n),
\end{equation}
for some function $L(n)$ that is slowly varying at infinity, see Rogozin~\cite{Rogozin}.

\begin{proof}[{\bf Proof of Proposition~\ref{TAILS}}]
I. The statements on $\xi_1^+$ and $\theta_1^+$.

\underline{Case 1}: $s=0$ and $t>0$. Without loss of generality, put $t=1$. We have
\begin{equation}\label{trivial}
\P \{\theta_1^+ >n\} = \widetilde{\P} \{\tau_+ >n+1\} = a_+^{-1} \P
\{\tau_+ >n+1, S_1>0 \} = a_+^{-1} \bigl (\P \{\tau_+ >n+1\} - a_0 \P \{\tau_+ >n\} \bigr),
\end{equation}
and by \eqref{ladder moments}, since $S_k$ is centered and has a finite variance,
\begin{equation}
\label{theta^+ as} \lim_{n \to \infty} n^{1/2} \P \{\theta_1^+ > n\} = \frac{1-a_0}{a_+}
\cdot \frac{e^{c_+ +c_0}}{\pi^{1/2}}.
\end{equation}
This relation proves Part (a) of the proposition.

To simplify the right-hand side of \eqref{theta^+ as}, write $\E |S_1| = 2 a_+ \E(S_1| S_1 >0)$, which follows from $\E S_1 =0$. Under assumptions of Part (b), $S_k$ is upper exponential, right-continuous, or upper geometric, so $\mbox{Law}(S_1| S_1 >0) = \mbox{Law}(S_{\tau_-})$, and recalling \eqref{ladder heights}, $\E |S_1| = 2 a_+ \E S_{\tau_-} = \sqrt{2} a_+ \sigma  e^{c_- + c_0}$. Then $e^{c_- + c_0} = \frac{\E |S_1|}{\sqrt{2} a_+ \sigma}$, and by $e^{c_+ + c_0 + c_-} =1$, we get $e^{c_+}= \frac{\sqrt{2} a_+ \sigma}{\E |S_1|}$. If $S_k$ is upper exponential, then clearly $\P\{S_k=0\}=a_0^k$, hence $e^{c_0}=\frac{1}{1-a_0}$, and from \eqref{theta^+ as} we have $C_{Law(S_1)} = \sqrt{\frac{2}{\pi}} \frac{\sigma}{\E|S_1|}$ for the constant in \eqref{2d tails}. If $S_k$ satisfies assumptions of Theorem~\ref{LOWER B}, by the same arguments as above, $e^{c_-}= \frac{\sqrt{2} a_- \sigma}{\E |S_1|}$. Now $e^{c_+ + c_0 + c_-} =1$ implies $e^{c_0} = \frac{(\E |S_1|)^2}{2 a_+ a_- \sigma^2}$, and from \eqref{theta^+ as}, $C_{Law(S_1)}=\frac{(1-a_0) \E|S_1|}{\sqrt{2 \pi} a_+ a_- \sigma}$.

\underline{Case 2}: $s \ge 0$ and $t > 0$. We state one important particular case of the result of Shimura~\cite{Shimura} on convergence of discrete excursions. Let $W(t)$ be a standard Brownian motion, and let $\bar{W}(t) := W(t) - \inf_{0 \le s \le t} W(s)$ be a reflecting Brownian motion. Then for any random walk $U_n$ such that $\E U_1 = 0$ and $0 < \V U_1 =: \sigma^2 < \infty$, for any $\varepsilon>0$
\begin{equation}
\label{Shimura} \mbox{Law} \Bigl( \Bigl(\frac{\tau_+}{n}, \frac{U_{\min\{\tau_+, [n
\cdot]\}}}{\sigma n^{1/2}} \Bigr) \Bigl | \Bigr. \tau_+ > \varepsilon
n\Bigr)\stackrel{\D}{\longrightarrow} \mbox{Law} \Bigl( \bigl(\nu_\varepsilon''-\nu_\varepsilon', \bar{W}
(\nu_\varepsilon' + \min\{\cdot, \nu_\varepsilon''-\nu_\varepsilon'\}) \bigr) \Bigr )
\end{equation}
in $\R \times \mathcal{D}[0, \infty)$ as $n \to \infty$, where $\mathcal{D}$ stands for Skorokhod space and $(\nu_\varepsilon', \nu_\varepsilon'')$ is the first pair of successive zeros of $\bar{W}$ such that $\nu_\varepsilon''-\nu_\varepsilon' >
\varepsilon$.

Since the r.v.'s $\nu_\varepsilon''-\nu_\varepsilon'$ and $\int_{\nu_\varepsilon'}^{\nu_\varepsilon''} \bar{W}(u) du$ are continuous, from \eqref{ladder moments} and \eqref{Shimura} we find that for any $s \ge 0$ and $t \ge \varepsilon$,
\begin{eqnarray}
&& \P \bigl \{ \xi_+ > sn^{3/2}, \tau_+ > tn \bigr \} \notag \\
&\sim& \P \{ \tau_+ > \varepsilon n \} \P \Bigl \{ \nu_\varepsilon''-\nu_\varepsilon' > t,
\int_{\nu_\varepsilon'}^{\nu_\varepsilon''} \bar{W}(u) du > \sigma s \Bigr \} \notag \\
&\sim& \frac{e^{c_+ + \, c_0}}{ (\pi \varepsilon n)^{1/2} } \P \Bigl \{
\nu_\varepsilon''-\nu_\varepsilon' > t, (\nu_\varepsilon'' - \nu_\varepsilon') \int_0^1
\bar{W}(\nu_\varepsilon' + u(\nu_\varepsilon'' - \nu_\varepsilon')) du > \sigma s \Bigr
\} \label{2d tail 1}
\end{eqnarray}
as $n \to \infty$, where $\xi_+:=\sum_{k=1}^{\tau_+ - 1} S_k$ and by definition,
$\Sigma_{\varnothing}:=0$.

We claim that, first, the process $W_{ex}^{(\varepsilon)}(\cdot) := (\nu_\varepsilon'' - \nu_\varepsilon')^{-1/2} \bar{W}(\nu_\varepsilon' + \cdot (\nu_\varepsilon'' - \nu_\varepsilon'))$ is a standard Brownian excursion $W_{ex}(\cdot)$ on $[0,1]$ and, second, $W_{ex}^{(\varepsilon)}(\cdot)$ is independent with $\nu_\varepsilon'' - \nu_\varepsilon'$. Recall
the definition $W_{ex} (\cdot) := (\nu''-\nu')^{-1/2} \bar{W}(\nu'+ \cdot(\nu''-\nu'))$, where $\nu'$ is the last zero of $\bar{W}(\cdot)$ before $1$ and $\nu''$ is the first zero after $1$. $W_{ex} (\cdot)$ is usually defined in terms of $|W(\cdot)|$ but we used that $\bar{W}(\cdot) \stackrel{\D}{=} |W(\cdot)|$.

Indeed, it is known (for instance, see Drmota and Marckert~\cite{Drmota}) that if $U_n$ is a simple random walk, then $$\mbox{Law} \Bigl( \frac{U_{[\tau_+ \cdot]}}{\sigma \tau_+^{1/2}}  \Bigl | \Bigr. \tau_+= n\Bigr) = \mbox{Law} \Bigl( \frac{U_{[n \cdot]}}{\sigma n^{1/2}}  \Bigl | \Bigr. \tau_+= n\Bigr) \stackrel{\D}{\longrightarrow} \mbox{Law} \Bigl( W_{ex} (\cdot) \Bigr)$$ in $\mathcal{D}[0, 1]$. Hence for any $a>0$ and any cylindrical set $\mathcal{A} \subset \mathcal{D}[0, 1]$ that is generated by the product of intervals (the latter ensures $\P \bigl\{ W_{ex} (\cdot) \in \partial \mathcal{A} \bigr \} = \P \bigl\{ W_{ex}^{(\varepsilon)} (\cdot) \in \partial \mathcal{A} \bigr \} = 0$),
\begin{equation} \label{ref1}
\P \Bigl \{ \frac{U_{[\tau_+ \cdot]}}{\sigma \tau_+^{1/2}} \in \mathcal{A}, \tau_+ > a n \Bigr \} = \P \{ \tau_+ > a n \} \Bigl( \P \bigl\{ W_{ex} (\cdot) \in \mathcal{A} \bigr \} + o(1) \Bigr ).
\end{equation}
On the other hand, \eqref{Shimura} yields $$\mbox{Law} \Bigl( \Bigl(\frac{\tau_+}{n}, \frac{U_{[\tau_+ \cdot]}}{\sigma \tau_+^{1/2}}  \Bigr) \Bigl | \Bigr. \tau_+ > \varepsilon n\Bigr) \stackrel{\D}{\longrightarrow}  \mbox{Law} \Bigl( \bigl(\nu_\varepsilon''-\nu_\varepsilon', W_{ex}^{(\varepsilon)}(\cdot) \bigr) \Bigr)$$ in $\R \times \mathcal{D}[0, 1]$. Hence if $ a \ge \varepsilon$, then
\begin{equation} \label{ref2}
\P \Bigl \{ \frac{U_{[\tau_+ \cdot]}}{\sigma \tau_+^{1/2}} \in \mathcal{A}, \tau_+ > a n \Bigr \} = \P \{ \tau_+ > \varepsilon n \} \Bigl( \P \bigl\{
\nu_\varepsilon'' - \nu_\varepsilon' > a, W_{ex}^{(\varepsilon)}(\cdot) \in \mathcal{A} \bigr \} + o(1) \Bigr ).
\end{equation}
Finally, comparing \eqref{ref1} and \eqref{ref2} and using \eqref{ladder moments}, we obtain
$$\Bigl( \frac{\varepsilon}{a} \Bigr)^{1/2} \P \bigl\{ W_{ex} (\cdot) \in \mathcal{A} \bigr \} = \P \bigl\{ \nu_\varepsilon'' - \nu_\varepsilon' > a, W_{ex}^{(\varepsilon)}(\cdot) \in \mathcal{A} \bigr \},$$ which implies $W_{ex}^{(\varepsilon)}(\cdot) \stackrel{\D}{=} W_{ex}(\cdot)$ and independence of $\nu_\varepsilon'' - \nu_\varepsilon'$ and $W_{ex}^{(\varepsilon)}(\cdot)$.

Now, since $\P \bigl \{ \nu_\varepsilon''-\nu_\varepsilon' > t \bigr \} = (\frac{\varepsilon}{t} )^{1/2}$ for $t \ge \varepsilon$, we rewrite \eqref{2d tail 1} as
\begin{eqnarray*}
\lim_{n \to \infty} n^{1/2} \P \bigl \{ \xi_+ > sn^{3/2}, \tau_+ > tn \bigr \} &=&
\frac{e^{c_+ + \, c_0}}{ 2 \pi^{1/2} } \int_t^\infty z^{-3/2} \P \Bigl \{ \int_0^1  W_{ex}(u) du > \sigma s z^{-3/2} \Bigr \} dz \\
&=& \frac{e^{c_+ + \, c_0}}{3 (\sigma s)^{1/3} \pi^{1/2}} \int_0^{\sigma s t^{-3/2}} v^{-2/3} \P \bigl \{ \xi_{ex} > v \bigr \} dv,
\end{eqnarray*}
where we changed variables and put $\xi_{ex}:=\int_0^1  W_{ex}(u) du$. For any $x > 0$, write
\begin{eqnarray*}
&& \frac{1}{3x^{1/3}}\int_0^x v^{-2/3} \P \bigl \{ \xi_{ex} > v \bigr \} dv = \P \bigl \{ \xi_{ex} > x \bigr \} -  \frac{1}{x^{1/3}} \int_0^x v^{1/3} d \, \P \bigl \{ \xi_{ex} \le v \bigr \} \\
&=& x^{-1/3} \E \min \bigl \{ \xi_{ex}^{1/3}, x^{1/3} \bigr \} = \E \min
\bigl \{ x^{-1/3} \xi_{ex}^{1/3}, 1 \bigr \} =: F(x).
\end{eqnarray*}
Then
$$\lim_{n \to \infty} n^{1/2} \P \bigl \{ \xi_+ > sn^{3/2}, \tau_+ > tn \bigr \} = \frac{e^{c_+ + \, c_0}}{(\pi t)^{1/2} } F(\sigma s t^{-3/2}),$$
and arguing as in \eqref{trivial},
$$\lim_{n \to \infty} n^{1/2} \P \bigl \{ \xi_1^+ > sn^{3/2}, \theta_1^+ > tn \bigr \} =
\frac{1-a_0}{a_+} \cdot \frac{e^{c_+ + \, c_0}}{(\pi t)^{1/2}} F(\sigma s t^{-3/2}).$$
We already explained above why the constant in the right-hand side has the required form.

\underline{Case 3}: $s>0$ and $t=0$. Since the right-hand side of \eqref{2d tails} at $t=0$ is defined by continuity and \eqref{2d tails} is already proved for $s,t >0$, we should check that $$\lim_{n \to \infty} n^{1/2} \P \{ \xi_1^+ > sn^{3/2} \} = \lim_{t \to 0} \lim_{n \to \infty} n^{1/2} \P \{ \xi_1^+ > sn^{3/2}, \theta_1^+ > t n \}.$$ By the law of total probability, it suffices to show $$\lim_{t \to 0} \limsup_{n \to \infty} n^{1/2} \P \{\xi_1^+ >sn^{3/2}, \theta_1^+ \le t n\} = 0.$$ But $$\P \{\xi_1^+ > sn^{3/2}, \theta_1^+ < t n\} \le \widetilde{\P} \{\max \limits_{1 \le k \le \tau_+ - 1} S_k >t^{-1} s n^{1/2}, \tau_+ < t n\} \le a_+^{-1} \P \{\max \limits_{1 \le k \le \tau_+ - 1} S_k >t^{-1} s n^{1/2}\},$$ where the second estimate was obtained as in \eqref{trivial}, and by definition, $\max_{\varnothing} := -\infty$. Now the required estimate follows from Theorem~2 of Simura~\cite{Shimura}.

II. The statements on $\xi_1^-$ and $\theta_1^-$.

If $S_n$ is upper exponential, simply use $(\xi_1^-, \theta_1^-) \stackrel{\D}{=} (-\xi_1^+, \theta_1^+)$ from Lemma~\ref{EXP SYMM} and the part of \eqref{2d tails} on $\xi_1^+$ and $\theta_1^+$ proven above. If $S_n$ satisfies assumptions of Theorem~\ref{LOWER B}, $(\theta_1^-, \xi_1^-)$ has the same distribution as $(\bar{\theta}_1^+, -\bar{\xi}_1^+)$, where the bar means that the walk $\bar{S}_n:=-S_n$ is considered. Since $\bar{S}_n$ satisfies assumptions of Theorem~\ref{LOWER B} if $S_n$ does, we use the part of \eqref{2d tails} on $\xi_1^+$ and $\theta_1^+$ proven above and $C_{Law(S_1)} = C_{Law(-S_1)}$.

III. The statements on $\xi_1$ and $\theta_1$.

We only consider the case $s=0$ letting, without loss of generality, $t=1$. The proof of the other cases is absolutely similar. Let us check that for $\theta_1 = \theta_1^+ + \theta_1^-$,
$$\lim_{n \to \infty} n^{1/2} \P \{\theta_1 > n\} = \lim_{n \to \infty} n^{1/2} \P \{\theta_1^+ > n\} + \lim_{n \to \infty} n^{1/2} \P \{\theta_1^- > n\}.$$
By standard arguments, it suffices to show that
\begin{equation}
\label{boring}
\lim_{n \to \infty} n^{1/2}\P \bigl \{\theta_1^+ > n, \theta_1^- > n \bigr \} = 0.
\end{equation}
Under assumptions of Theorem~\ref{LOWER B}, $\theta_1^+$ and $\theta_1^-$ are independent, and the statement is trivial.

Otherwise, consider an independent copy $S'_n$ of the walk $S_n$. For any $x \ge 0$, put $\tau'_-(x):= \min \{k \ge 1: \, S'_k > x \}$. Since $\theta_1^- = \max\{k \ge 1: \widetilde{S}_{\theta_1^+ + k} - \widetilde{S}_{\theta_1^+ + 1} \le -\widetilde{S}_{\theta_1^+ + 1} \}$, we have $\theta_1^- \stackrel{\D}{=} \tau'_- \bigl( -\widetilde{S}_{\theta_1^+ + 1} \bigr)$, and for any $M >0$,
$$\P \bigl \{\theta_1^+ > n, \theta_1^- > n \bigr \} \le
\P \bigl \{\theta_1^+ > n, \widetilde{S}_{\theta_1^+ + 1} < -M \bigr \} + \P \bigl \{\theta_1^+ > n \bigr \} \P \bigl \{ \tau_-(M) > n \bigr \}.$$
Arguing as in \eqref{trivial}, we get \eqref{boring} from \eqref{ladder moments} and $$\lim_{M \to \infty} \limsup_{n \to \infty} n^{1/2} \P \bigl \{\tau_+ > n, S_{\tau_+} < -M \bigr \} =0,$$ which follows from Lemma~4 in Eppel~\cite{Eppel}.

IV. The statements on $\xi_1^0$ and $\theta_1^0$.

It is well known (Spitzer~\cite[Sec. 32]{Spitzer}) that
\begin{equation}
\label{Spitzer}
\lim_{n \to \infty} n^{1/2} \P\{\theta_1^0 > n\} = \sqrt{\frac{2}{\pi}} \sigma
\end{equation}
for any integer-valued random walk with a finite variance. Then we find the asymptotics of the ``tail'' of $(\theta_1^0, \xi_1^0)$ exactly as the one of $(\theta_1^+, \xi_1^+)$, up to the following differences. First, we use \eqref{Spitzer} instead of \eqref{ladder moments}. Second, instead of referring to \eqref{Shimura}, use the result of Kaigh~\cite{Kaigh} that $\frac{U_{[n \cdot]}}{\sigma n^{1/2}}$ conditioned on $\{\theta_1^0 = n\}$ weakly converges to a signed Brownian excursion $\varrho W_{ex}(\cdot)$, where $\P\{\varrho=1\}= \P\{\varrho=-1\}=1/2$ and $\varrho$ is independent of $W_{ex}(\cdot)$. The additional assumption that $S_1$ has span $1$ is required to use the result of Kaigh~\cite{Kaigh}.
\end{proof}

\section{The upper bound} \label{Sec Upper}

1. $S_n$ is an upper exponential random walk.

Define $\nu := \min \bigl \{k > 0 : \, \xi_1 + \dots + \xi_k < 0 \bigr\}$. Then $$\xi_1 +
\dots + \xi_\nu = \sum_{i=1}^{\tau_1} \widetilde{S}_i + \dots + \sum_{i=\tau_{\nu-1}
+1}^{\tau_\nu} \widetilde{S}_i = \sum_{i=1}^{\tau_\nu} \widetilde{S}_i < 0$$ implying $\P
\{ \tau_\nu \le N\} \le \P \bigl \{ \min \limits_{1 \le k \le N} \sum_{i=1}^k
\widetilde{S}_i  < 0 \bigr \} = 1 - \tilde{p}_N$, hence
\begin{equation}\label{upper}
\tilde{p}_N \le \P \{ \tau_\nu >N\}.
\end{equation}
We stress that \eqref{upper} is true for every random walk, but the r.v.'s $\xi_i$ are
i.i.d. if $S_n$ is upper exponential (or, of course, if $S_n$ is integer-valued and either upper geometric or right-continuous).

By a Tauberian theorem (see Feller~\cite[Ch.~XIII]{Feller}), the asymptotics of $\P \{
\tau_\nu >N\}$ as $N \to \infty$ can be found if we know the behavior of the
generating function $\chi(t)$ of $\tau_\nu$ as $t \nearrow 1$: for any $p \in (0,1)$ and
$c>0$,
\begin{equation}\label{Tauberian}
\P \{ \tau_\nu >N\} \sim \frac{c}{\Gamma(p) N^{1-p}} \quad
\Longleftrightarrow \quad 1 - \chi(t) \sim c (1-t)^{1-p} .
\end{equation}

Let us first find the generating function of the joint distribution of $\nu$ and
$\tau_\nu.$ For any positive integer $k$ and $l$, $$\P \bigl \{ \nu = k, \tau_\nu = l
\bigr \} = \P \Bigl \{ \xi_1 \ge 0, \dots, \xi_1 + \dots + \xi_{k-1} \ge 0, \xi_1 + \dots + \xi_k < 0, \theta_1 + \dots + \theta_k = l \Bigr \}.$$ The r.v. $\nu$ is the first descending ladder epoch of the walk $\xi_1 + \dots + \xi_n$, and its generating function is described by the Sparre-Andersen theorem, see Feller~\cite[Ch. XII]{Feller}. Sinai~\cite{Sinai} (Lemma~3) gives the following straightening of this result: the generating function $$\chi(s,t):= \sum_{k,l \ge 1} \P \{ \nu = k, \tau_\nu = l \} s^k t^l$$ of the random vector $(\nu, \tau_\nu)$ satisfies
$$\ln \frac{1}{1-\chi(s,t)} = \sum_{k, l \ge 1} \frac{s^k t^l}{k} \P \Bigl \{ \xi_1 +
\dots + \xi_k < 0, \, \theta_1 + \dots + \theta_k = l \Bigr \}.$$

By Lemma~\ref{EXP SYMM}, for the generating function $\chi(t) := \chi(1, t)$ of
$\tau_\nu$ it holds
\begin{eqnarray}
\ln \frac{1}{1-\chi(t)} &=& \sum_{k, l \ge 1} \frac{t^l}{k} \P \Bigl \{ \xi_1 + \dots +
\xi_k < 0, \, \theta_1 + \dots + \theta_k = l \Bigr \} \notag \\
&=& \frac12 \sum_{k, l \ge 1} \frac{t^l}{k} \P \Bigl \{ \theta_1 + \dots + \theta_k = l
\Bigr \} \label{1 - xi 2}
\end{eqnarray}
Since $\theta_k$ are i.i.d., $$\sum_{k, l \ge 1} \frac{t^l}{k} \P \Bigl \{ \theta_1 +
\dots + \theta_k = l \Bigr \} = \sum_{k = 1}^\infty \frac{1}{k} \sum_{l = 1}^\infty t^l
\P \Bigl \{ \theta_1 + \dots + \theta_k = l \Bigr \} = \sum_{k = 1}^\infty \frac{1}{k}
\zeta^k(t) = \ln \frac{1}{1- \zeta(t)},$$ where $\zeta(t)$ is the generating function of
$\theta_1$. Then
\begin{equation}\label{xi -- zeta}
1-\chi(t) = \sqrt{1-\zeta(t)},
\end{equation}
and using Part (a) of Proposition~\ref{TAILS} and the Tauberian theorem~\eqref{Tauberian} twice, we get $\P \{ \tau_\nu >N\} \sim c N^{-1/4}$. By \eqref{p eqv p} and \eqref{upper}, the upper bound follows.

Case 2. $S_n$ is an integer-valued random walk.

We argue exactly as in the proof of the first part. Replacing everywhere $\xi_n$ and
$\theta_n$ by $\xi_n^0$ and $\theta_n^0$, respectively,  we get $p_N \le \P \{
\tau_{\nu^0}^0 >N\}$ instead of \eqref{upper} and $$1-\chi^0(t) = \sqrt{1-\zeta^0(t)}
e^{H(t)}$$ instead of \eqref{xi -- zeta}, where $$H(t):= \frac12 \sum_{k, l \ge 1}
\frac{t^l}{k} \P \Bigl \{ \xi_1^0 + \dots + \xi_k^0 = 0, \, \theta_1^0 + \dots +
\theta_k^0 = l \Bigr \}$$ emerges in the analogue of \eqref{1 - xi 2}. The limit $\lim
\limits_{t \to 1} H(t)$ exists and is finite because $H(t)$ is increasing and the series
$$H(1) = \sum_{k, l \ge 1} \frac1k \P \Bigl \{ \xi_1^0 + \dots + \xi_k^0 = 0, \,
\theta_1^0 + \dots + \theta_k^0 = l \Bigr \} = \sum_{k=1}^\infty \frac1k \P \Bigl \{
\xi_1^0 + \dots + \xi_k^0 = 0 \Bigr \}=c_0$$ is convergent for any random walk. Hence the upper bound follows from Part (a) of Proposition~\ref{TAILS} and the Tauberian theorem
\eqref{Tauberian} as above.

\section{The lower bound} \label{Sec Lower}
By \eqref{main estimate'}, we estimate
\begin{eqnarray*}
\widetilde{\P}\Bigl \{ \min \limits_{1 \le k \le N} \sum_{i=1}^k S_i  \ge 0 \Bigr \}
&\ge& \P \Bigl \{ \min \limits_{1 \le k \le \sqrt{N}} \sum_{i=1}^k \xi_i  \ge 0, \, \eta(N)+1 \le \sqrt{N} \Bigr \} \\
&=& \P \Bigl \{ \min \limits_{1 \le k \le \sqrt{N}} \sum_{i=1}^k \xi_i  \ge 0, \, \theta_1 + \cdots + \theta_{\sqrt{N}} > N \Bigr \}  \\
&\ge& \P \Bigl \{ \min \limits_{1 \le k \le \sqrt{N}} \sum_{i=1}^k \xi_i  \ge 0, \,
\theta_1^+ + \cdots + \theta_{\sqrt{N}}^+ > N \Bigr \} .
\end{eqnarray*}
By Lemma~\ref{ASSOC} and sufficient condition of association (c),
\begin{eqnarray*}
\widetilde{\P}\Bigl \{ \min \limits_{1 \le k \le N} \sum_{i=1}^k S_i  \ge 0 \Bigr \} &\ge& \P \Bigl \{ \min \limits_{1 \le k \le \sqrt{N}} \sum_{i=1}^k \xi_i  \ge 0 \Bigr \} \cdot \P \Bigl \{  \theta_1^+ + \cdots + \theta_{\sqrt{N}}^+ > N \Bigr \}\\
&\ge& c\, \P \Bigl \{ \min \limits_{1 \le k \le \sqrt{N}} \sum_{i=1}^k \xi_i \ge 0 \Bigr \}
\end{eqnarray*}
for some $c>0$ and all $N$, were we used Part (a) of Proposition~\ref{TAILS} for the last line.

Under assumptions of Part 2 of Theorem~\ref{LOWER B}, the distribution of $\xi_1$ is symmetric, see Lemma~\ref{EXP SYMM} for the case of two-sided exponential walks. Hence for the random walk $\sum_{i=1}^k \xi_i$ we have $c_+ = -c_0/2$, which is always finite, and Part 2 of Theorem~\ref{LOWER B} follows from \eqref{p eqv p}, \eqref{tau^+ -- min}, and \eqref{ladder moments}.

The proof of Part 1 of Theorem~\ref{LOWER B}, actually, takes much more efforts
because it requires the use of Corollary~\ref{COR XI} of Proposition~\ref{TAILS}. The latter implies that $\P \{ \xi_1 + \dots + \xi_n > 0\} \to 1/2$. Unfortunately, we can not verify that the series
\begin{equation} \label{series}
\sum_{n=1}^\infty \frac1n \bigl( \P \{ \xi_1 + \dots + \xi_n > 0\} - 1/2 \bigr)
\end{equation}
converges, and we should use \eqref{tau_+ tail l} instead of \eqref{ladder moments}.

Convergence of series of the type \eqref{series} was studied by Egorov~\cite{Egorov}, who considered rates of convergence in stable limit theorems and stated his results exactly
in the form of \eqref{series}. It is, however, unclear how to check his conditions for our case. A proof of the convergence would eliminate the slowly varying factor $l(N)$ in
Theorem~\ref{LOWER B}.

\section{Open questions and concluding remarks} \label{Sec Concluding}
1. Obtaining the lower bound under less restrictive conditions.

The most restrictive assumptions of Theorem~\ref{LOWER B} are the ones imposed on $\mbox{Law} (S_1 | S_1<0)$. We used these assumptions {\it only} in the proof of association of $\xi_1$ and $\theta_1^+$. It seems that these variables are associated under much less restrictive conditions and, possibly, under no assumptions at all. Simulations show that association holds in many cases. Note that the direct use of sufficient condition of association (c) is impossible because $\xi_1$ is {\it not} a
coordinate-wise increasing function of associated r.v.'s $\widetilde{S}_1,
\widetilde{S}_2, \dots$

2. Elimination of the slowly varying term in Theorem~\ref{LOWER B}.

As we explained above, the slowly varying factor could be eliminated if we show that the series \eqref{series} is convergent. The rate of convergence in stable limit theorems is usually estimated under existence of so-called pseudomoments of $\xi_1$. The pseudomoment of $\xi_1$ of order $1/3$ exists if the functions $x^{1/3} \P \{ \xi_1 > x\}$ and $x^{1/3} \P \{ \xi_1 < -x\}$ have a regular behavior as $x \to \infty$. It seems that the ``tails'' of $\xi_1$ could be controlled if we had an appropriate rate of convergence of discrete excursions to a Brownian excursion. We know only one result on this question: Drmota and Marckert~\cite{Drmota} gives the rate of convergence of positive excursions of left-continuous random walks. Since we need rates for both positive and negative excursions, the only slackened random walks would be covered, giving no refinement to Theorem~\ref{LOWER B}.

3. When the first draft of this paper was already written, the author became aware that Frank Aurzada and Steffen Dereich were also working on one-sided small deviation probabilities of integrated random processes, and they considered $p_N$ as a particular case. The methods of their paper~\cite{Germans} are entirely different from the ones presented here.

\section*{Acknowledgements}
A part of this work was done during the visit of the author to the University Paris 12
Val de Marne. The author thanks the University and his host Marguerite Zani for care and hospitality. He is also grateful to Mikhail Lifshits and Wenbo Li for their attention to the paper and to Vidmantas Bentkus and Vladimir Egorov for discussions on rates of convergence in stable limit theorems. Finally, the author thanks the anonymous referee for comments and useful suggestions.

\end{document}